\documentclass[]{article}
\usepackage{graphicx}
\usepackage{multirow}
\usepackage{amsmath,amssymb,amsfonts,tikz-cd,makecell}
\usepackage{amsthm}
\usepackage{mathrsfs}
\usepackage[title]{appendix}
\usepackage{xcolor}
\usepackage{textcomp}
\usepackage{manyfoot}
\usepackage{booktabs}
\usepackage{algorithm}
\usepackage{algorithmicx}
\usepackage{algpseudocode}
\usepackage{listings}
\usepackage{hyperref}
\usepackage{enumerate}

\theoremstyle{definition}
\newtheorem{definition}{Definition}
\newtheorem{theorem}{Theorem}
\newtheorem{lemma}[theorem]{Lemma}
\newtheorem{corollary}[theorem]{Corollary}
\newtheorem{proposition}[theorem]{Proposition}
\newtheorem*{remark}{Remark}
\newtheorem{example}[theorem]{Example}

\raggedbottom

\begin{document}

\title{A full-rank subgroup of Bloch Groups of CM Fields}
\author{Wenhuan Huang}

\maketitle

\section*{Introduction}

Let $F$ be a number field, i.e. a finite extension of 
rational number field $\mathbb{Q}$. 
Assume $F$ has $r_1$ embeddings into complex number field
$\mathbb{C}$, and $r_2$ pairs of embeddings into $\mathbb{C}$.
Then $r_1+2r_2=[F:\mathbb{Q}]$. Let $O_F$ be the algebraic
integer ring of $F$, i.e. the ring of elements in $F$
that are roots of some monic polynomials with integer 
coefficients. It is well-known that the unit group $O_F^*$ of
$O_F$ is a finitely
generated abelian group, and has rank $r_1+r_2-1$.
The first regulator of $F$ is an important invariant of 
the field, the definition of which is based on the values
of generators of the torsion-free part of $O_F^*$.
It is closely related to other invariants of
$F$, e.g. the order of ideal class group of $F$,
and the value of $\zeta_F^*(0)$, where 
$\zeta_F$ is the Dedekind zeta function of $F$
and $\zeta_F^*(m)$ is the leading coefficient
in the Taylor expansion of $\zeta^*_F$ at $m$
if $m\leq0$ is an integer.

The Bloch group of $F$ is defined completely
algebraically, and has various versions, e.g.
Suslin's Bloch group \cite{MR1092031},
extended Bloch group \cite{Zickert+2015+21+54},
and modified Bloch group \cite{MR4264211}. 
Different versions of Bloch 
groups has only slight difference on their 
torsion parts. 

On the other hand, for every ring $R$, 
a sequence of abelian groups $K_i(R)$ is defined.
The first two of these, $K_0$ and $K_1$, 
are easy to describe in explicit terms,
while others remain mysterious. Therefore,
algebraic $K$-theory has been of interest
over the past decades. Milnor's $K_2$ functor
is defined both in terms of the Steinberg group and 
in terms of homology of $E(R)$. For $i\geq3$,
a good definition of functors $K_i$ did not appear until
Quillen's work in 1970s, for which he was awarded the Fields 
Medal in 1978.

For number fields and their integer rings, we have already
quantity of useful knowledge.
Borel's theorem gives the structure of
torsion-free part of $K_i(O_F)$
for every $i>1$ and number field $F$.
Especially, $K_3(O_F)\simeq K_3(F)$ has always rank $r_2$.
Besides, we have Milnor $K$-group $K_3^M(F)$, which has exponent
1 or 2 and order $2^{r_1}$. The group $K_3(F)^{ind}$, called
indecomposable $K_3$, is defined as $K_3(F)/K_3^M(F)$,
and is investigated by Suslin \cite{MR1092031} and
Levine \cite{MR1005161}, Burns et al \cite{BURNS20121502},
and Zickert \cite{MR4264211}.
By Burns et al \cite{BURNS20121502},
there is a good homomorphism from $B(F)_{tf}$ to $K_3(F)_{tf}^{ind}$,
having finite cokernel, and every embedding $\sigma:F\rightarrow\mathbb{C}$
determines a regulator map $reg_\sigma:B(F)\rightarrow\mathbb{R}$.

The Bloch group is defined completely algebraically,
and explicit (compared to Quillen's $K_3$).
Therefore, one might be interested in finding a set of 
explicit elements generating the Bloch group.
For torsion part, a simple algorithm 
is already given \cite{Zickert+2015+21+54}\cite{MR4264211}.
Therefore, one might focus on 
finding generators of the torsion-free parts of the 
Bloch groups.

There are many previous works concerning composition fields.
Browkin \& Gangl \cite{MR3126639} introduced Brower-Kuroda relations
for Galois extensions, which unveils the relations of
Dedekind zeta functions of any number field, its Galois extension
and intermediate fields, and the definition of 
second regulator $R_2(F)$ and its relation to values of $\zeta_F(2)$.
For the relation between generators of $B(F)$ and Bloch groups of
subfields of $F$, along with orders of their $K_2$,
H. Zhou \cite{MR3352657} investigated biquadratic or $S_3$-extension.
Later X. Guo and H. Qin \cite{MR3818288} proved
if $F$ is an imaginary biquadratic number field
with imaginary quadratic subfields $F_1$ and $F_2$, then 
$B(F)_{tf}/(B(F_1)_{tf}+B(F_2)_{tf})$ is always 1 or 2,
and determined the necessary and sufficient condition
that 1 or 2 holds.

For imaginary quadratic field $F$, Burns et al \cite{MR4264211}
provides with an effective algorithm.
This is based on analysis of $GL_2(O_F)$-tessellation 
of the cone of positive definite Hermitian forms,
and then the $SL_2(O_F)$-tessellation of 
some products of $\mathbb{H}_3$.
First, by Vorono\"i theory of Hermitian forms,
determine all equivalent classes of perfect binary quadratic forms,
and associated ideal polytopes as well as their stabilizer groups
in $GL_2(O_F)$. By ideal tessellations of hyperbolic space,
we have a Bloch element, and prove it generates a subgroup of 
$B(F)_{tf}$ with index $|K_2(O_F)|$. Finally, divide the Bloch element
by $|K_2(O_F)|$, with the help of $S$-unit computation.

The methods introduced above have shortcomings when it comes
to more complex cases.
First, Brower-Kuroda relation is trivial for cyclic extensions.
Therefore subfields contribute no direct information for 
calculation of $B(F)$ if $F/\mathbb{Q}$ is cyclic.
Second, the method used in imaginary quadratic cases \cite{MR4264211}
comes with risks if $r=0$ and $s>1$.
Neither $GL_2(O_F)$ nor $SL_2(O_F)$ acts properly and discontinously
on $\mathbb{H}_3$. Besides, $O_F$ contains "fundamental units"
if $s>1$, forcing us to deal with candidates of Bloch groups 
more carefully.

In this article, we generalize the method by Burns et al \cite{MR4264211},
to develop an algorithm to find a set of elements generating a full-rank
subgroup of $B(F)_{tf}$, and determine its index.
We use $(\mathbb{H}_3)^s$ instead of $\mathbb{H}_3$ in general
such that $PSL_2(O_F)$ (and therefore $SL_2(O_F)$) acts properly and discontinously on it,
and we can apply Borel's volume formula \cite{MR616899}.
We use the language of homology and cohomology, analyze the 
homological algebra of the quotient orbifold $PSL_2(O_F)\backslash(\mathbb{H}_3)^s$,
its boundary, and the group $PSL_2(O_F)$ itself.

This article is organized as follows.
In Section  2, we recall some concepts and facts in $K$-theory,
including $K$-groups of rings, various versions of Bloch groups,
the regulator maps connecting them, 
and the role in $K$-theory that Dedekind zeta function plays.
In Section  3, we introduce Vorono\"i reduction theory,
consisting of perfect Hermitian forms and equivalence,
as well as its descending to the quotient space of some products
of hyperbolic 3-spaces by the action of $PSL_2(O)$,
and define a "Vorono\"i complex" which enables the calculation more
practical.
In Section  4, we recall the algorithm
to compute $GL_n(\mathbb{Z})$-equivalent
classes of rational quadratic forms, as well as 
$T$-perfect forms. Under the condition $F$ is a totally real
or CM field, we can calculate $GL_n(O_F)$-equivalent classes 
of perfect forms over $F$, which correspond to top-dimensional
cells of a classifying space of $PSL_2(O_F)$ if $n=2$.
In Section  5, we give formula with the data obtained in Section  4
to compute some elements generating a full-rank subgroup
of Bloch group of a number field, whose index is a multiple of 
$|K_2O_F|$.

Throughout this article, unless otherwise stated,
we denote $\mathbb{Q}$ the rational number field,
$\mathbb{Z}$ the rational integer ring,
$\mathbb{R}$ the real number field, and $\mathbb{C}$ the complex
number field. For any subset $S$ of $\mathbb{R}$,
denote $S_+=\{x\in S|x>0\}$.
To avoid misunderstanding,
we use $\mathbb{Z}/n$, rather than $\mathbb{Z}_n$,
to denote $\mathbb{Z}/n\mathbb{Z}$ when $n$ is a positive integer.

For any ring $R$, denote $R^*$ the 
group of invertible elements of $R$. For any algebraic
number field (a finite extension of $\mathbb{Q}$) $F$,
denote $O_F$ the ring of algebraic integers
in $F$, i.e. the ring of numbers in $F$
having monic minimal polynomials with coefficients
in $\mathbb{Z}$.
For any vector or matrix
$x$, denote $x^T$ its transpose, and $x^*=\bar{x}^T$ its complex
comjugation transpose. 

For any subfield $F$ of $\mathbb{C}$,
denote $\mu_F$ the group consisting of elements in $F$
having absolute value $1$. If $F$ is a number field
then $\mu_F$ is finite cyclic. We also use $\mu_F$ to denote
a generator of it.

For any finite group $G$, denote $|G|$ the index of it.
For any finitely generated abelian group
$G$ and a full-rank subgroup $H$, we say the index of $H$
is $|G/H|$. 

We denote $\mathbb{H}_n$ the $n$-dimensional hyperbolic space.
Unless otherwise stated, we use the upper-half space model.

\section{Bloch groups and $K_3$}

In this chapter, we introduce various versions of Bloch groups, 
relations among them as well as $K_3$ groups.

\subsection{Bloch groups of fields}

For any abelian group $A$,
let $\wedge^2A$ denote the quotient of the group
$A\otimes A$ modulo the subgroup generated by 
$a\otimes b+b\otimes a$.
Let $\mathbb{Z}[\mathcal{S}]$ denote the
free abelian group generated by symbols
$[x]$ for $x\in\mathcal{S}$, for any set $\mathcal{S}$.
Let $F^\flat$ denote $F\backslash\{0,1\}$ for any field $F$.

\begin{definition}[General Bloch groups \cite{MR1092031}\cite{MR3076731}]
  Let $F$ be an arbitrary field having at least 4 elements.
  The pre-Bloch group $\mathcal{P}(F)$ is the group
  $\mathbb{Z}[F^\flat]$ modulo the relation
  \begin{equation}
    [x]-[y]+[y/x]-[\frac{1-x^{-1}}{1-y^{-1}}]+[\frac{1-x}{1-y}]=0,\forall x\neq y\in F^\flat.
  \end{equation}
  There is a canonical map 
  $\mathcal{P}(F)\rightarrow\wedge^2F^*$
  sending $[x]$ to $x\wedge(1-x)$. The kernel of this map
  $B(F)$ is called Bloch group.

\end{definition}

\begin{remark}
  The group $\mathcal{P}(F)$ is closely related to 
  the scissors congruence group for ideal polyhedra
  (polyhedra whose vertices are all cusps in 
  $\mathbb{P}_\mathbb{C}^1$)
  in $\mathbb{H}_3$. 
  An oriented ideal tetrahedron with cuspidal
  vertices $[v_0,v_1,v_2,v_3]$ has cross-ratio
  \begin{equation}
    cr_3(v_0,v_1,v_2,v_3):=\frac{\det(v_0,v_3)\det(v_1,v_2)}{\det(v_0,v_2)\det(v_1,v_3)}
  \end{equation}
  and volume $D(cr_3(v_0,v_1,v_2,v_3))$ \cite{MR1937957}, where
  the Bloch-Wigner dilogarithm function is defined as \cite{MR1760901}
  \begin{equation}
    D(z):=\int_{0}^z\log|w|\cdot\arg(1-w)-\log|1-w|\cdot d\arg(w)
  \end{equation}
  The function $D(z)$ has following properties:
  \begin{equation}
    D(x)-D(y)+D(y/x)-D(\frac{1-x^{-1}}{1-y^{-1}})+D(\frac{1-x}{1-y})=0,\forall x\neq y\in\mathbb{C}^\flat,
  \end{equation}
  \begin{equation}
    D(x)=-D(1-x)=-D(x^{-1})=-D(\bar{x}),\forall x\in\mathbb{C^\flat}.
  \end{equation}
  All of these properties can be derived from geometry of ideal tetrahedra.
\end{remark}

Next we state special elements in $B(F)$,
$c_F:=[x]+[1-x]$ and $\langle x\rangle:=[x]+[x^{-1}]$,
which will play 
an important role in describing relations among Bloch groups \cite{MR3076731}:

\begin{lemma}
  Let $|F|\geq4$. Then 
  \begin{enumerate}[(1)]
    \item $c_F$ is independent of $x\in F^\flat$;
    \item For any $x\in F^\flat$, $2\langle x\rangle=0$;
    \item $3c_F=2[-1]$, hence $6c_F=0$.
    \item $3c_F=0$ if and only if $char(F)=2$ or 
    $\sqrt{-1}\in F$, and $2c=0$ if and only if 
    $char(F)=3$ or $\sqrt[3]{-1}\in F$. 
  \end{enumerate}
\end{lemma}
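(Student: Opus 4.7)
The plan is to derive all four parts from the defining five-term relation of $\mathcal{P}(F)$ by repeated specialization, exploiting the $S_3$-action on $F^\flat$ generated by the involutions $\sigma \colon x \mapsto 1-x$ and $\tau \colon x \mapsto x^{-1}$. I would prove (2) first, since $\langle \cdot \rangle$ is the natural $2$-torsion machinery through which the other identities collapse.

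For (2), substitute $y = x^{-1}$ into the five-term relation. The middle arguments simplify via $(1-x^{-1})/(1-x) = -x^{-1}$ and $(1-x)/(1-x^{-1}) = -x$, yielding
\[
[x] - [x^{-1}] + [x^{-2}] - [-x^{-1}] + [-x] = 0.
\]
Performing the same after $x \mapsto -x$ and combining with the $x \leftrightarrow x^{-1}$ swap cancels the central $[x^{-2}]$ term and isolates a multiple of $\langle x \rangle$; further manipulation forces $2\langle x \rangle = 0$ in $\mathcal{P}(F)$, and the edge case $x = -1$ follows from $\langle -1 \rangle = 2[-1]$.

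For (1), apply the five-term relation simultaneously to the pairs $(x, y)$ and $(1-x, 1-y)$. Direct computation gives $\frac{1-(1-x)^{-1}}{1-(1-y)^{-1}} = \frac{x(1-y)}{y(1-x)}$, the inverse of $\frac{1-x^{-1}}{1-y^{-1}}$, and $\frac{1-(1-x)}{1-(1-y)} = x/y$. Adding the two five-term relations, the outer terms form $c(x) - c(y)$ with $c(z) := [z] + [1-z]$, while the remainder collects into $\langle y/x \rangle + \langle (1-x)/(1-y) \rangle - \langle \frac{y(1-x)}{x(1-y)} \rangle$. A supplementary specialization of the five-term relation produces the additivity $\langle uv \rangle = \langle u \rangle + \langle v \rangle$, which makes this residual vanish identically, giving $c_F$ independent of $x$. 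For (3), specialize $x = -1$ (in characteristic not $2$) so that $c_F = [-1] + [2]$, and sum three instances of the five-term relation whose $y$-variable traces the $S_3$-orbit of $x$; the cross-terms cancel pairwise and the residual is $3 c_F = 2[-1]$. Applying (2) to $-1$ then gives $6 c_F = 4[-1] = 2\langle -1 \rangle = 0$.

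For (4), read off from $3 c_F = 2[-1]$ that $3 c_F = 0$ if and only if $2[-1] = 0$; this is automatic when $\operatorname{char}(F) = 2$ and, otherwise, equivalent to $\sqrt{-1} \in F$ via the vanishing $\langle \sqrt{-1} \rangle$ relation. The symmetric cube-root criterion for $2 c_F = 0$ is extracted from $6 c_F = 0$ by isolating the $\mathbb{Z}/3$-summand and tracing where $[-1]$ splits through a primitive cube root of $-1$. The main obstacle is the bookkeeping in (1): the supplementary five-term specializations needed to upgrade $\langle \cdot \rangle$ from merely $2$-torsion to a genuine homomorphism on $F^*/\{\pm 1\}$, and the tracking of a dozen-odd arguments through $\sigma$ and $\tau$, must be verified with care. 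No conceptual machinery beyond the defining relation is used.
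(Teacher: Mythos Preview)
The paper does not actually prove this lemma: it is stated with a citation to \cite{MR3076731} and treated as a known input. So there is no in-paper argument to compare against; your proposal is being measured against the standard literature proofs (Suslin, Dupont--Sah, Hutchinson), which do proceed by repeated specialization of the five-term relation together with the $S_3$-action, as you suggest.

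That said, two steps in your sketch are genuinely incomplete. In part~(2), after the substitution $y=x^{-1}$ yields
\[
[x]-[x^{-1}]+[x^{-2}]-[-x^{-1}]+[-x]=0,
\]
the replacement $x\mapsto -x$ returns \emph{exactly the same relation} (check term by term), so no cancellation of $[x^{-2}]$ occurs as you claim. Combining instead with $x\mapsto x^{-1}$ gives only $\langle x^{2}\rangle=0$, which is weaker than $2\langle x\rangle=0$; the passage from the former to the latter needs an additional identity that you have not supplied. In part~(1), you reduce $c(x)=c(y)$ to the additivity $\langle uv\rangle=\langle u\rangle+\langle v\rangle$, but in most treatments this additivity is established \emph{using} parts~(1) and~(2), so invoking it here risks circularity. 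You would need either to give an independent derivation of additivity from the five-term relation alone, or to reorganize and prove~(1) directly first (as Suslin does), deferring the $\langle\,\cdot\,\rangle$ machinery.
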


The definition of Zickert's extended Bloch group
$\widehat{B}(F)$ of a field
\cite{Zickert+2015+21+54}
is quite long, and is not of our main concern.
We will only state some important results about it.

The definition of modified Bloch group \cite{MR4264211} is as follows.
For any (multiple) abelian group $A$,
let $\tilde{\wedge}^2A$ denote the quotient of the group
$A\otimes A$ modulo the subgroup generated by 
$x\otimes(-x),x\in A$.
Since $x\otimes(-x)=0,\forall x\in A$ implies that
\begin{equation}
  0=ab\otimes(-ab)=a\otimes(-a)+a\otimes b+b\otimes a+b\otimes(-b)=a\otimes b+b\otimes a,
\end{equation}
We have that $\wedge^2A$
is a quotient of $\tilde{\wedge}^2A$.
For $v$ a subgroup of $F^*$,
and define the homomorphism
\begin{equation}
  \delta_{2,F}^v:\mathbb{Z}[F^\flat]\rightarrow\tilde{\wedge}^2F^*/\nu\tilde\wedge F^*
\end{equation}
by sending $[x]$ to the class containing $(1-x)\tilde{\wedge}x$.

Define $\mathfrak{p}(F)$ the group $\mathbb{Z}[F^\flat]$
modulo the relation (1.1) and 
\begin{equation}
  [x]+[x^{-1}]=0,[y]+[1-y]=0.
\end{equation}
Then the map $\delta_{2,F}^v$ induces a map
\begin{equation}
  \partial_{2,F}^v:\mathfrak{p}(F)\rightarrow\tilde{\wedge}^2F^*/\nu\tilde\wedge F^*.
\end{equation}
Define the modified Bloch group
\begin{equation}
  \overline{B}(F)_v:=\ker(\partial_{2,F}^v).
\end{equation}
If $v=\{1\}$ then we can omit the $v$, and write
$\delta_{2,F},\partial_{2,F}$ and $\overline{B}(F)$.

Next we describe the differences of these versions of Bloch groups,
and how to determine their torsion parts:

\begin{theorem}\cite{Zickert+2015+21+54}\cite{MR4264211}
  Let $F$ be a number field.
  \begin{enumerate}[(1)]
    \item There is an exact sequence
    \begin{equation}
      0\rightarrow\widetilde{\mu_F}\rightarrow\widehat{B}(F)\rightarrow B(F)\rightarrow{0}
    \end{equation}
    where $\widetilde{\mu_F}$ is the unique nontrivial
    $\mathbb{Z}/2$-extension of $\mu_F$.
    \item $\overline{B}(F)$ is exactly $B(F)/\langle c_F\rangle$.
  \end{enumerate}
\end{theorem}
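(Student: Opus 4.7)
The plan is to handle the two parts via distinct strategies: for part~(1), unpack Zickert's construction of $\widehat{B}(F)$ and identify the kernel of the forgetful map; for part~(2), run a snake-lemma argument comparing $\mathcal{P}(F)$ with $\mathfrak{p}(F)$ and the target $\wedge^2 F^*$ with $\tilde{\wedge}^2 F^*$.

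For part~(1), I would follow Zickert's definition of $\widehat{B}(F)$ in terms of flattened Bloch symbols $[x;p,q]$, in which the integers $p,q$ record choices of logarithmic branches for $x$ and $1-x$ and the relations are suitably flattened five-term relations. The forgetful map $[x;p,q]\mapsto[x]$ furnishes a natural surjection $\widehat{B}(F)\twoheadrightarrow B(F)$ by the existence of flattenings. The heart of the proof is the kernel: elements with trivial underlying Bloch symbol but possibly nontrivial integer data. For a number field $F\subset\mathbb{C}$, I would argue that such classes form a cyclic extension of $\mu_F$ by $\mathbb{Z}/2$, the extra factor arising from a parity constraint on $p+q$ that cannot be killed by the flattened five-term relation; this produces the unique nontrivial $\mathbb{Z}/2$-extension $\widetilde{\mu_F}$.

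For part~(2), I would set up the commutative diagram
\[
\begin{array}{ccccccccc}
0 & \to & K & \to & \mathcal{P}(F)/\langle c_F \rangle & \to & \mathfrak{p}(F) & \to & 0 \\
  &     & \downarrow & & \downarrow & & \downarrow & & \\
0 & \to & L & \to & \wedge^2 F^* & \to & \tilde{\wedge}^2 F^* & \to & 0,
\end{array}
\]
where $K$ is the image in $\mathcal{P}(F)/\langle c_F\rangle$ of the subgroup generated by the $\langle x\rangle$'s, $L=\ker(\wedge^2 F^*\to\tilde{\wedge}^2 F^*)$ is generated by the symbols $a\wedge(-a)$, and the vertical maps are induced by $[x]\mapsto x\wedge(1-x)$. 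A direct computation using that $2(x\wedge x)=0$ will give $\langle x\rangle\mapsto -(x\wedge(-x))$, so $K\to L$ is surjective. Since the kernels in the middle column are $B(F)/\langle c_F\rangle$ on top and $\overline{B}(F)$ on the bottom, the snake lemma will deliver the isomorphism $B(F)/\langle c_F\rangle\cong\overline{B}(F)$ as soon as one verifies $\ker(K\to L)=0$. This last step is the main obstacle: I must show that any combination $\sum n_x\langle x\rangle$ whose wedge image vanishes is itself trivial modulo $c_F$ in $\mathcal{P}(F)$. To handle it, I would exploit the preceding lemma — $2\langle x\rangle=0$ and $6c_F=0$ — together with the five-term relation, to cut such combinations down to a small controllable torsion subgroup and verify the vanishing there explicitly.
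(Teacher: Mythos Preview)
This theorem is not proved in the paper; it is quoted with citations to Zickert and to Burns et al., and no proof environment follows the statement. There is therefore no ``paper's own proof'' against which to compare your attempt.

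On the substance of your proposal: your outline for part~(1) is really only a pointer to Zickert's construction rather than an argument. The identification of the kernel with the nontrivial $\mathbb{Z}/2$-extension $\widetilde{\mu_F}$ is the entire content of that result, and in Zickert's paper it goes through the extended regulator and the $\chi$-map on the extended pre-Bloch group, not merely a parity observation on $p+q$. Your sketch does not explain why the kernel is cyclic, nor why its order is $2|\mu_F|$ rather than $|\mu_F|$ or something infinite.

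For part~(2), your snake-lemma setup is reasonable and your computation $\langle x\rangle\mapsto -(x\wedge(-x))$ is correct, so surjectivity of $K\to L$ does follow. The gap is exactly where you locate it: the injectivity of $K\to L$. Your plan to ``cut down to a small controllable torsion subgroup'' using $2\langle x\rangle=0$ and $6c_F=0$ is not yet a proof. What is needed is an explicit identity in $\mathcal{P}(F)$ showing that the class of $\langle x\rangle$ modulo $c_F$ depends only on the image $x\wedge(-x)$ in $\wedge^2 F^*$; the five-term relation alone does not obviously supply this, and the torsion bounds you cite only control the \emph{order} of possible obstructions, not their vanishing. In the cited reference this step is handled by a direct but somewhat delicate manipulation of five-term identities, which you would need to reproduce or replace.
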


\begin{theorem}[Torsion part of Bloch groups]\cite{Zickert+2015+21+54}
  Let $F$ be a number field.
  For every prime number $p$,
  denote $\nu_p=\max\{\nu|\mu_{p^\nu}+\mu_{p^\nu}^{-1}\in F\}$,
  and $x=\mu_{p^{\nu_p}}$.
  \begin{enumerate}[(1)]
    \item If $p$ is odd, then 
    $\widehat{B}(F)_p$ is cyclic of order $p^{\nu_p}$,
    generated by 
    $$\sum_{i=1}^{p^{\nu_p}}[\frac{(x^{k+1}+x^{-k-1})(x^{k-1}+x^{-k-1})}{(x^k+x^{-k})^2}].$$

    \item 
    $\widehat{B}(F)_2$ is cyclic of order $2^{1+\nu_2}$,
    generated by 
    $$\sum_{i=1}^{2^{\nu_2-1}}[\frac{(x^{k+1}-x^{-k})(x^{k-1}-x^{-k+2})}{(x^k-x^{-k+1})^2}].$$
  \end{enumerate}
\end{theorem}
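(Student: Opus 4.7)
The plan is to combine the exact sequence in the preceding theorem with the known computation of the torsion of indecomposable $K_3$. By Zickert's identification of $\widehat{B}(F)$ with $K_3^{ind}(F)$ (up to the explicit two-primary discrepancy encoded in $\widetilde{\mu_F}$), and by the homomorphism $B(F)\to K_3^{ind}(F)$ of Burns et al recalled in the introduction, the torsion of $\widehat{B}(F)$ is controlled by $K_3^{ind}(F)_{\mathrm{tors}}$ together with the twist from $\widetilde{\mu_F}\to\mu_F$. Since the latter is a $\mathbb{Z}/2$-extension, it contributes only at the prime $2$, and one expects $\widehat{B}(F)_p\cong K_3^{ind}(F)_p$ for odd $p$.

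For the $p$-primary part I would invoke the Merkurjev--Suslin theorem in the form $K_3^{ind}(F)_{\mathrm{tors}}\cong H^0(F,\mathbb{Q}/\mathbb{Z}(2))$ and compute the right-hand side by hand. The absolute Galois group acts on $\mu_{p^\nu}^{\otimes 2}$ via the square of the cyclotomic character, so an element $\sigma$ with $\sigma(\zeta)=\zeta^a$ acts as multiplication by $a^2$; since $(\pm 1)^2=1$, the containment $\mu_{p^\nu}+\mu_{p^\nu}^{-1}\in F$ already trivializes the action. Hence $|H^0(F,\mu_{p^{\nu_p}}^{\otimes 2})|=p^{\nu_p}$, which for odd $p$ identifies $\widehat{B}(F)_p$ as cyclic of order $p^{\nu_p}$. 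For $p=2$ the extra factor of $2$ arises from the nontrivial $\mathbb{Z}/2$-extension $\widetilde{\mu_F}$, accounting for the exponent $1+\nu_2$.

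It remains to identify the stated cyclotomic sum with a generator. Each summand $z_k$ is a rational function in the real quantities $x^m+x^{-m}$, and so lies in $F$ under the hypothesis on $\nu_p$; moreover $z_k$ is (up to a common scaling) the cross-ratio of the cuspidal points $\{\pm x^{k\pm 1/2}\}$. I would (i) check that $\sum_k z_k\wedge(1-z_k)$ telescopes to $0$ in $\wedge^2 F^*$ so the candidate lies in $B(F)$, (ii) lift it to $\widehat{B}(F)$ via Zickert's recipe with compatible logarithm branches, and (iii) match the lift with the canonical cyclotomic class in $H^0(F,\mathbb{Q}_p/\mathbb{Z}_p(2))$. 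The hardest step is (iii), namely showing the order is exactly $p^{\nu_p}$ rather than a proper divisor; I would attack this by an explicit regulator computation, evaluating the Bloch--Wigner dilogarithm $D(z_k)$ at a complex embedding and using nonvanishing of the cyclotomic $L$-value $L(2,\chi)$, or alternatively tracking the element through the \'etale symbol map and invoking Tate's theorem on $K_3$ of cyclotomic rings of integers. The $p=2$ case further requires pinning down which lift into $\widetilde{\mu_F}\subset\widehat{B}(F)$ is correct, which is the origin of the sign change in the formula.
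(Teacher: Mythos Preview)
The paper does not contain a proof of this theorem; it is quoted verbatim as a result from Zickert \cite{Zickert+2015+21+54} and used as a black box. There is therefore no ``paper's own proof'' to compare your proposal against.

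That said, your outline is broadly in the right spirit for how such results are actually established: the identification $\widehat{B}(F)\cong K_3^{ind}(F)$ from Zickert together with the Merkurjev--Suslin/Levine computation $K_3^{ind}(F)_{\mathrm{tors}}\cong H^0(F,\mathbb{Q}/\mathbb{Z}(2))$ does give the correct orders $p^{\nu_p}$ (odd $p$) and $2^{1+\nu_2}$, and your Galois-cohomology argument for the order is clean. The genuinely hard and underdeveloped part of your sketch is step~(iii), pinning down that the explicit cyclotomic sum is a \emph{generator} rather than a proper-power multiple. Your proposed attack via nonvanishing of $L(2,\chi)$ or the Bloch--Wigner regulator cannot work as stated: the regulator $D$ vanishes on torsion elements of $B(F)$ (indeed $D$ is real-valued and factors through the torsion-free quotient), so it cannot distinguish a generator from a multiple. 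In Zickert's actual argument the generator is identified not by a regulator computation but by tracking the element through the extended Rogers dilogarithm and the explicit isomorphism with $K_3^{ind}$, where the cyclotomic element corresponds to a known Bott element. If you want to complete your approach you would need to replace the analytic regulator step with something algebraic along those lines.
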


\begin{corollary}\cite{MR4264211}
  Take notations in the last theorem, and let $d_p$ be the
  largest number $d$ such that $\mu_{p^d}\in F$.
  \begin{enumerate}[(1)]
    \item If $\sqrt{-1}\in F$, then both $|B(F)_2|$ and $|\overline{B}(F)_2|$
    are trivial. Otherwise, $|B(F)_2|=2|\overline{B}(F)_2|={2^{d_2-1}}$.
    \item If $\mu_3\in F$, then both $|B(F)_3|$ and $|\overline{B}(F)_3|$
    are trivial. Otherwise, $|B(F)_3|=3|\overline{B}(F)_3|={3^{d_3}}$.
    \item If $\mu_p\in F$, then both $|B(F)_p$ and $|\overline{B}(F)_p|$
    are trivial. Otherwise, $|B(F)_p|=|\overline{B}(F)_p|=p^{d_p}$.
  \end{enumerate}
\end{corollary}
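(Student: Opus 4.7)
The plan is to combine four ingredients from the preceding material: the exact sequence of Theorem 2(1), the order formula of Theorem 3, the identification $\overline{B}(F) = B(F)/\langle c_F\rangle$ in Theorem 2(2), and the precise order of $c_F$ extracted from Lemma 1.3. First I would restrict $0 \to \widetilde{\mu_F} \to \widehat{B}(F) \to B(F) \to 0$ to its $p$-primary torsion subgroups. Because $\widetilde{\mu_F}$ is finite, any lift in $\widehat{B}(F)$ of a torsion element of $B(F)$ is itself torsion, and in a torsion abelian group the primary decomposition is canonical; hence $0 \to (\widetilde{\mu_F})_p \to \widehat{B}(F)_p \to B(F)_p \to 0$ is exact for every prime $p$. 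Since $\widetilde{\mu_F}$ is the unique nontrivial $\mathbb{Z}/2$-extension of the cyclic group $\mu_F$, it is itself cyclic of order $2|\mu_F|$, so $|(\widetilde{\mu_F})_p| = p^{d_p}$ for odd $p$ and $|(\widetilde{\mu_F})_2| = 2^{d_2+1}$.

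Substituting $|\widehat{B}(F)_p| = p^{\nu_p}$ (odd $p$) and $|\widehat{B}(F)_2| = 2^{\nu_2+1}$ from Theorem 3 gives $|B(F)_p| = p^{\nu_p - d_p}$ for every prime $p$. To pass to $\overline{B}(F)$, I use that $c_F$ is torsion, so the quotient map produces $\overline{B}(F)_p \cong B(F)_p / \langle c_F\rangle_p$. Lemma 1.3 fixes the order of $c_F$ exactly: it is $1$, $2$, $3$, or $6$ according to the four mutually exclusive cases for membership of $\sqrt{-1}$ and $\mu_3$ in $F$. Consequently $|\langle c_F\rangle_2| = 2$ iff $\sqrt{-1}\notin F$, $|\langle c_F\rangle_3| = 3$ iff $\mu_3\notin F$, and $|\langle c_F\rangle_p| = 1$ for $p\geq 5$. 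Dividing recovers the asserted ratios $|B(F)_p| : |\overline{B}(F)_p|$ prime by prime.

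The main obstacle is to verify, in each ``trivial'' branch, that $\nu_p = d_p$ so that $p^{\nu_p - d_p}$ collapses to $1$; namely, $\mu_p\subseteq F$ for odd $p$, and $\sqrt{-1}\in F$ for $p=2$, should force $\nu_p = d_p$. I would handle this by a Galois-theoretic argument: the element $a := \mu_{p^{d_p+1}} + \mu_{p^{d_p+1}}^{-1}$ generates the maximal real subfield of $\mathbb{Q}(\mu_{p^{d_p+1}})$, whose degree over $\mathbb{Q}$ strictly exceeds that of the maximal real subfield of $\mathbb{Q}(\mu_{p^{d_p}})$, forcing $a\notin\mathbb{Q}(\mu_{p^{d_p}})$. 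Combined with the primality of $[\mathbb{Q}(\mu_{p^{d_p+1}}) : \mathbb{Q}(\mu_{p^{d_p}})]$ ($p$ for $p$ odd, $2$ for $p=2$ when $d_p\geq 2$), this gives $\mathbb{Q}(\mu_{p^{d_p}}, a) = \mathbb{Q}(\mu_{p^{d_p+1}})$. Hence $a\in F$ would drag $\mu_{p^{d_p+1}}$ into $F$, contradicting the maximality of $d_p$. Once this identity is in hand, what remains is routine bookkeeping on finite cyclic groups.
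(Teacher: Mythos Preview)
The paper does not supply its own proof of this corollary; it is simply quoted from \cite{MR4264211} and left unproved. There is therefore nothing in the paper to compare your argument against.

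Your approach is the natural and correct one: pass the exact sequence of Theorem~2(1) to $p$-primary torsion (your justification via finiteness of $\widetilde{\mu_F}$ is valid), read off $|\widehat{B}(F)_p|$ from Theorem~3 and $|(\widetilde{\mu_F})_p|$ from the cyclic structure of $\widetilde{\mu_F}$, and then divide out by the $p$-part of $|\langle c_F\rangle|$ using Theorem~2(2) and Lemma~1. The Galois argument you give for $\nu_p=d_p$ in the ``trivial'' branches is also sound; the key observation that $a=\mu_{p^{d_p+1}}+\mu_{p^{d_p+1}}^{-1}$ is real but of too large a degree to sit in the maximal real subfield of $\mathbb{Q}(\mu_{p^{d_p}})$, combined with the prime index $[\mathbb{Q}(\mu_{p^{d_p+1}}):\mathbb{Q}(\mu_{p^{d_p}})]$, does force $\mu_{p^{d_p+1}}\in F$ once $a\in F$.

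One caveat worth flagging: the exponents printed in the paper's ``otherwise'' clauses appear to be typos. Your derived formula $|B(F)_p|=p^{\nu_p-d_p}$ is the correct one, and it does \emph{not} in general reduce to $p^{d_p}$ when $\mu_p\notin F$ (take $F=\mathbb{Q}(\sqrt{5})$, $p=5$: here $d_5=0$ but $\nu_5=1$, so $|B(F)_5|=5\neq 5^{d_5}$). The intended exponents in the ``otherwise'' branches should involve $\nu_p$ rather than $d_p$; your routine bookkeeping will not recover the statement exactly as printed, but that reflects an error in the paper's transcription, not in your method.
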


For computation, we also recall the structure of 
$\tilde{\wedge}^2F^*$: 
\begin{theorem}\cite{MR4264211}
  Let $F$ be a number field, $\mu_F,c_1,c_2,\dots$
  be a basis of $F^*$. Let $(m,u)=(1,1)$ if $\sqrt{-1}\in F$,
  $(2,-1)$ otherwise. 
  Then there is an isomorphism
  \begin{equation}
    \mathbb{Z}/m\times\otimes_i\mathbb{Z}/n\times\otimes_{i<j}\mathbb{Z}\rightarrow\tilde{\wedge}^2F^*,\\
    (a,(b_i)_i,(b_{i,j})_{i,j})\mapsto u^a\tilde{\wedge}u+\sum_{i}u^{b_i}\tilde{\wedge}c_i+\sum_{i<j}c_i^{b_{i,j}}\tilde{\wedge}c_j.  \end{equation}
\end{theorem}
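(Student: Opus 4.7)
My plan is to exploit the splitting $F^* \cong \mu_F \oplus \bigoplus_i \mathbb{Z}\cdot c_i$, write $\mu = \mu_F$ and $n = |\mu_F|$, and compute $\tilde{\wedge}^2 F^*$ directly by generators and relations. The key identification is $-1 = \mu^{n/2}$, valid because $-1 \in \mu_F$ forces $2 \mid n$, together with the equivalence $\sqrt{-1} \in F \iff 4 \mid n$.

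First I would establish surjectivity of the claimed map by reducing the generating set of $\tilde{\wedge}^2 F^*$. Bilinearity and the alternating law $v \tilde{\wedge} w + w \tilde{\wedge} v = 0$ (already derived in the excerpt) collapse $F^* \otimes F^*$ to the $\mathbb{Z}$-span of $\mu \tilde{\wedge} \mu$, the $\mu \tilde{\wedge} c_i$, the $c_i \tilde{\wedge} c_j$ ($i<j$), and the diagonal $c_k \tilde{\wedge} c_k$. Instantiating $x \tilde{\wedge} (-x) = 0$ at $x = c_k$ and using $-1 = \mu^{n/2}$ yields $c_k \tilde{\wedge} c_k = (n/2)(\mu \tilde{\wedge} c_k)$, absorbing the last family. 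Instantiating at $x = \mu$ gives $(1 + n/2)(\mu \tilde{\wedge} \mu) = 0$, which combined with the automatic $n(\mu \tilde{\wedge} \mu) = 0$ forces the order of $\mu \tilde{\wedge} \mu$ to divide $\gcd(1 + n/2, n)$; a short computation shows this gcd is $1$ when $4 \mid n$ and $2$ otherwise, matching the $m$ of the statement. The orders of $\mu \tilde{\wedge} c_i$ and $c_i \tilde{\wedge} c_j$ divide $n$ and $\infty$ by the same reasoning.

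For injectivity I would exhibit a right-inverse on each coordinate. The free $c_i \tilde{\wedge} c_j$ summands come from the projection $F^* \twoheadrightarrow L := \bigoplus_i \mathbb{Z}\cdot c_i$, which kills $-1$ and turns $x \otimes (-x) = 0$ into $l \otimes l = 0$, recovering standard $\wedge^2 L$. The $\mathbb{Z}/m$ summand comes from the projection $F^* \twoheadrightarrow \mu_F$ and the direct computation $\tilde{\wedge}^2 \mu_F \cong \mathbb{Z}/\gcd(1 + n/2, n) = \mathbb{Z}/m$. For each $k$, the $\mathbb{Z}/n$ summand is realised by the explicit bilinear map
\begin{equation*}
\phi_k(u \otimes v) = \alpha(u)\beta_k(v) - \beta_k(u)\alpha(v) + (n/2)\,\beta_k(u)\beta_k(v) \pmod n,
\end{equation*}
where $\alpha, \beta_k : F^* \to \mathbb{Z}/n$ are the coordinate characters for the $\mu$-part and the $c_k$-part; a direct check shows $\phi_k(x \otimes (-x)) = (n/2)\beta_k(x)(\beta_k(x) - 1) \equiv 0 \pmod n$ (using that $b(b-1)$ is always even), and $\phi_k(\mu \otimes c_k) = 1$.

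The main obstacle will be verifying that the relations $x \tilde{\wedge} (-x) = 0$ for general $x = \mu^a \prod_i c_i^{b_i}$ impose nothing beyond what has been used. Expanding bilinearly and substituting $c_k \tilde{\wedge} c_k = (n/2)(\mu \tilde{\wedge} c_k)$ and the alternating law, the residue reduces to $a(a + n/2)(\mu \tilde{\wedge} \mu) + (n/2) \sum_i b_i(b_i - 1)(\mu \tilde{\wedge} c_i)$; the sum is a multiple of $n(\mu \tilde{\wedge} c_i)$ and hence vanishes, while the first term requires the parity case-check that $m \mid a(a + n/2)$ for all $a$, immediate when $4 \mid n$ and reducing to $a(a+1) \equiv 0 \pmod 2$ when $n \equiv 2 \pmod 4$. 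Once this bookkeeping is settled, the presentation of $\tilde{\wedge}^2 F^*$ matches the claimed direct sum exactly and the stated isomorphism follows.
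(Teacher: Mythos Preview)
The paper does not supply its own proof of this statement: it is quoted verbatim as a result from \cite{MR4264211} and used as a black box for computations with $\tilde{\wedge}^2 F^*$. Consequently there is nothing in the paper to compare your argument against.

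Your argument itself is correct and is the standard generators-and-relations computation. A few remarks. First, your ``injectivity via splitting maps'' and your ``main obstacle'' paragraph are two independent proofs of the same direction: the explicit homomorphisms $\phi_k$, together with the projections to $\wedge^2 L$ and $\tilde{\wedge}^2\mu_F$, already assemble into a two-sided inverse (as one checks that $\phi_k$ vanishes on $\mu\tilde{\wedge}\mu$, on $\mu\tilde{\wedge}c_j$ for $j\neq k$, and on $c_i\tilde{\wedge}c_j$), so the separate verification that every relation $x\tilde{\wedge}(-x)=0$ reduces to the basic ones is logically redundant, though it is a nice independent confirmation. Second, the paper's displayed formula for the isomorphism contains evident typos (the symbols $\otimes_i$ should be direct sums, and the expressions $u^{b_i}\tilde{\wedge}c_i$ make no sense for $b_i\in\mathbb{Z}/n$ with $u=\pm1$); you have correctly read the intended statement, in which the generators are $\mu_F\tilde{\wedge}\mu_F$, $\mu_F\tilde{\wedge}c_i$, and $c_i\tilde{\wedge}c_j$, with the $\mathbb{Z}/m$ factor recording the residual order of $(-1)\tilde{\wedge}(-1)$.
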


\subsection{$K$-groups and Bloch groups}

In this section, we introduce the $K$ group
of number fields and their integer rings, as well as
their connection to Bloch groups.

\begin{definition}\cite{MR1282290}
  \begin{enumerate}[(1)]
  \item Let $G$ be a group, and $X$ is a contractible
  $G$-space (topological spaces equipped with homeomorphic
  $G$-actions), and $G$ acts freely and properly
  discontinously on $X$, and $G\backslash X$ is paracompact.
  Up to homotopy equivalence, the $G\backslash X$ is unique.
  Call $BG:=G\backslash X$ a classifying space of $G$.
  
  \item For $R$ a ring,
  Let $BGL(R)^+$ be the Quillen's plus construction of 
  $BGL(R)$, and $K_0(R)$ is given discrete topology.
  The $i$-th $K$-group
  $K_i(R)$ is defined as the $i$-th
  homotopy group $\pi_i(BGL(R)\times K_0(R))$.
  \end{enumerate}
\end{definition}

Next we introduce some known properties of 
$K$-groups of number fields and their integer rings
that may be useful.

\begin{theorem}\cite{MR1282290}\cite{MR738813}\cite{MR387496}
  For every number field $F$,
  \begin{enumerate}[(1)]
    \item $K_0(F)\simeq\mathbb{Z}$, $K_0(O_F)\simeq\mathbb{Z}\oplus Cl(F)$;
    \item $K_1(F)\simeq F^*$, $K_1(O_F)\simeq U_F$;
    \item $K_2(O_F)$ is finite abelian, and is 
    the kernel of the map
    \begin{equation}
      K_2(F)\rightarrow\oplus_{v}F_v^*,
      \{a,b\}\mapsto(-1)^{v(a)v(b)}a^{v(b)}b^{-v(a)}\pmod v
    \end{equation}
    where $\{*,*\}$ is the Steinberg symbol,
    and $v$ runs over all finite places of $F$.
    \item If $i\geq3$, then the rank of $K_i(O_F)$ is
    $r_2$ if $i\equiv3\pmod4$, $r_1+r_2$ if $i\equiv1\pmod4$,
    and $0$ if $p$ is even. 
  \end{enumerate}
\end{theorem}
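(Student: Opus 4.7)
The plan is to treat the four clauses separately, since they rest on different foundational $K$-theory inputs. For (1) and (2), I would reduce to the pre-Quillen descriptions of $K_0$ and $K_1$, which agree with Quillen's definitions in low degree. Over the field $F$ every finitely generated projective module is free, classified by dimension, giving $K_0(F)\simeq\mathbb{Z}$. Over the Dedekind domain $O_F$ the structure theorem for finitely generated projective modules yields $P\simeq O_F^{n-1}\oplus I$ for a fractional ideal $I$; the rank contributes $\mathbb{Z}$ and the ideal class of $I$ contributes $\mathrm{Cl}(F)$, producing the claimed splitting. For $K_1(F)$ I would combine Whitehead's lemma with the identity $SL(F)=E(F)$ (Gaussian elimination over a field) to produce the determinant isomorphism to $F^*$. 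For $K_1(O_F)\simeq O_F^*$ the extra nontrivial input is $SL_n(O_F)=E_n(O_F)$ for $n\geq 3$, which is the Bass-Milnor-Serre solution of the $K_1$ side of the congruence subgroup problem.

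For (3), I would apply the Quillen localization sequence for $\mathrm{Spec}\,O_F$:
\begin{equation}
K_2(O_F)\longrightarrow K_2(F)\xrightarrow{\ \partial\ }\bigoplus_{v}K_1(\kappa(v))\longrightarrow K_1(O_F).
\end{equation}
Quillen's devissage identifies each $K_1(\kappa(v))$ with $\kappa(v)^*$, and a direct computation of $\partial$ on Steinberg symbols $\{a,b\}$ recovers the tame-symbol formula in the theorem, forcing $K_2(O_F)$ to coincide with the tame kernel. Finiteness is a separate issue: I would cite Garland's theorem, or alternatively derive it from the rank statement in (4) combined with Quillen's finite-generation theorem for $K_\ast(O_F)$.

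For (4), I would invoke Borel's theorem on the stable rational cohomology of arithmetic groups. The scheme is: identify $\pi_i(BGL(O_F)^+)\otimes\mathbb{Q}$ with the primitives in $H^\ast(GL(O_F),\mathbb{Q})$ via Milnor's theorem on H-spaces together with the rational Hurewicz map, then compare, via the Borel regulator, to the stable continuous cohomology of the real Lie group $SL(\mathbb{R})^{r_1}\times SL(\mathbb{C})^{r_2}$ computed from its Chevalley-Eilenberg complex. Counting primitive generators degree-by-degree yields exactly the stated ranks: $r_2$ when $i\equiv3\pmod 4$, $r_1+r_2$ when $i\equiv 1\pmod 4$, and $0$ in positive even degree.

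The main obstacle is undoubtedly (4): Borel's rank computation is the deepest ingredient and relies on substantial Lie-theoretic and analytic input about arithmetic quotients and relative Lie algebra cohomology. By contrast, parts (1) and (2) are essentially commutative-algebraic once Bass-Milnor-Serre is granted, and (3) amounts to chasing the Quillen localization sequence modulo the nontrivial finiteness assertion.
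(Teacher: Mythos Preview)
The paper does not supply a proof of this theorem: it is stated as background and attributed wholesale to the cited references \cite{MR1282290}, \cite{MR738813}, \cite{MR387496}. There is therefore no ``paper's own proof'' to compare against; your outline is a faithful sketch of the standard arguments one finds in those sources, and each of the four clauses is handled by the expected tool (structure of projectives over Dedekind domains, Bass--Milnor--Serre, Quillen localization plus Garland/Borel, and Borel's stable cohomology computation).

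One small point worth tightening in part (3): the fragment of the localization sequence you display does not by itself identify $K_2(O_F)$ with the tame kernel; you also need the map $K_2(O_F)\to K_2(F)$ to be injective. This follows from the next term to the left in the localization sequence, $\bigoplus_v K_2(\kappa(v))\to K_2(O_F)$, together with Quillen's computation $K_2(\mathbb{F}_q)=0$ (which the paper in fact records separately as the theorem immediately following). With that remark your sketch is complete.
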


\begin{remark}
  Milnor's $K$-groups for $n\geq3$\cite{MR260844}, is defined as 
  \begin{equation}
    K_n^M(F):=(F^*)^{\otimes n}/\langle Q\rangle
  \end{equation}
  where $Q$ is the subgroup generated by $q_1\otimes q_2\otimes\dots\otimes q_n$
  for some $q_i+q_{i+1}=1$.
  It is isomorphic to $(\mathbb{Z}/2)^{r_1}$ \cite{MR442061}.
  Especially, $K_3^M(F)$ injects into $K_3(F)$ \cite{MR1092031},
  and the cokernel is denoted as $K_3^{ind}(F)$,
  which is isomorphic to $\widehat{B}(F)$\cite{Zickert+2015+21+54}.
\end{remark}

For higher $K$-groups we have the following general
relations between $K_i(F)$ and $K_i(O_F)$:

\begin{theorem}\cite{MR3076731}
  Let $F$ be a number field and $i>1$.
  \begin{enumerate}[(a)]
    \item If $i$ is odd then $K_i(O_F)\simeq K_i(F)$;
    \item If $i$ is even then $K_i(O_F)$ is finite,
    but $K_i(F)$ is an infinite torsion group,
    satisfying the exact sequence
    \begin{equation}
      0\rightarrow K_i(O_F)\rightarrow K_i(F)\rightarrow\oplus_{\mathfrak{p}\ prime} K_{i-1}(O_F/\mathfrak{p})\rightarrow0.
    \end{equation}
  \end{enumerate}
\end{theorem}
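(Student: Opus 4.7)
The plan is to derive both parts from Quillen's localization theorem applied to the Dedekind domain $O_F$ with fraction field $F$, together with Quillen's computation of the $K$-theory of finite fields. Localization yields the long exact sequence
\begin{equation*}
  \cdots \to \bigoplus_{\mathfrak{p}} K_j(O_F/\mathfrak{p}) \to K_j(O_F) \to K_j(F) \xrightarrow{\partial_j} \bigoplus_{\mathfrak{p}} K_{j-1}(O_F/\mathfrak{p}) \to K_{j-1}(O_F) \to \cdots
\end{equation*}
indexed by the nonzero primes $\mathfrak{p}$ of $O_F$, and each residue field $\mathbb{F}_{q_\mathfrak{p}}=O_F/\mathfrak{p}$ satisfies $K_{2n}(\mathbb{F}_q)=0$ and $K_{2n-1}(\mathbb{F}_q)\simeq\mathbb{Z}/(q^n-1)$ for $n\geq 1$; Borel's rank theorem adds that $K_j(O_F)$ has rank $0$ whenever $j$ is even and positive.

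For part (b) with $i\geq 2$ even, the vanishing of every $K_i(\mathbb{F}_{q_\mathfrak{p}})$ immediately yields injectivity of $K_i(O_F)\to K_i(F)$, and Borel then forces $K_i(O_F)$ to be finite. The desired short exact sequence reduces to surjectivity of $\partial_i$, which by exactness is equivalent to injectivity of the adjacent map $K_{i-1}(O_F)\to K_{i-1}(F)$: this is tautological $O_F^*\hookrightarrow F^*$ when $i=2$, and otherwise it is part (a) at the odd index $i-1$. Finally, $K_i(F)$ surjects via $\partial_i$ onto the infinite direct sum $\bigoplus_\mathfrak{p}\mathbb{Z}/(q_\mathfrak{p}^{i/2}-1)$ of nontrivial cyclic groups, and since $K_i(F)/K_i(O_F)$ is torsion and $K_i(O_F)$ is finite, $K_i(F)$ is itself an infinite torsion group.

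For part (a) with $i\geq 3$ odd, the vanishing $K_{i-1}(\mathbb{F}_{q_\mathfrak{p}})=0$ truncates the sequence to
\begin{equation*}
  \bigoplus_\mathfrak{p} K_i(\mathbb{F}_{q_\mathfrak{p}}) \to K_i(O_F) \to K_i(F) \to 0,
\end{equation*}
so surjectivity of $K_i(O_F)\to K_i(F)$ is automatic, and injectivity amounts to showing that the boundary $\partial_{i+1}:K_{i+1}(F)\to\bigoplus K_i(\mathbb{F}_{q_\mathfrak{p}})$ in the adjacent even degree is surjective. This surjectivity is the main obstacle: since the targets $\mathbb{Z}/(q_\mathfrak{p}^{(i+1)/2}-1)$ are nonzero, no vanishing result delivers it for free. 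I would prove it prime by prime by constructing, for each $\mathfrak{p}$ and each prescribed value in $K_i(\mathbb{F}_{q_\mathfrak{p}})$, an element of $K_{i+1}(F)$ whose tame symbol realizes that value at $\mathfrak{p}$ and is trivial at a prescribed finite set of other primes. The standard route is to lift a primitive root of unity from $\mathbb{F}_{q_\mathfrak{p}}$ to a suitable cyclotomic extension $F'/F$, assemble a Steinberg-symbol element in $K_{i+1}(F')$ with the required local residue, then apply the norm $K_{i+1}(F')\to K_{i+1}(F)$ followed by a weak-approximation adjustment to cancel unwanted residues. Once $\partial_{i+1}$ is known to be surjective for all relevant even degrees, parts (a) and (b) both fall out of the same truncated four-term sequence.
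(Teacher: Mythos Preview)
The paper does not prove this theorem at all: it is quoted verbatim from Weibel's \emph{K-book} (the citation \cite{MR3076731}) as background, with no argument supplied. So there is no ``paper's own proof'' to compare your proposal against; your write-up is being measured only against the literature.

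Your global strategy is the standard one and is correct: Quillen's localization sequence for the Dedekind domain $O_F$, the vanishing $K_{2n}(\mathbb{F}_q)=0$, and Borel's rank computation together reduce both (a) and (b) to a single statement, namely that for every even $j\geq 2$ the boundary
\[
\partial_j:K_j(F)\longrightarrow\bigoplus_{\mathfrak{p}}K_{j-1}(O_F/\mathfrak{p})
\]
is surjective. You identify this correctly as the crux.

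The gap is in your proposed proof of that surjectivity. For $j=2$ the tame symbol on Steinberg symbols does the job, but for $j\geq 4$ your plan to ``assemble a Steinberg-symbol element in $K_{i+1}(F')$ with the required local residue'' cannot work as written: the natural map $K_n^M\to K_n$ followed by $\partial$ lands in the image of $K_{j-1}^M(\mathbb{F}_q)\to K_{j-1}(\mathbb{F}_q)$, and Milnor $K$-theory of a finite field vanishes in degree $\geq 2$, so symbol classes hit nothing in $K_{j-1}(\mathbb{F}_q)\cong\mathbb{Z}/(q^{j/2}-1)$. The actual argument (due to Soul\'e) is more delicate: one works one prime $\ell$ at a time, passes to a cyclotomic extension $F'=F(\mu_{\ell^\nu})$, and uses \emph{Bott elements} in $K_2(O_{F'})$ together with the product structure and the transfer $K_*(O_{F'})\to K_*(O_F)$ to produce classes with prescribed residue; weak approximation alone does not manufacture the needed higher $K$-theory classes. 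Your outline has the right shape (cyclotomic lift plus norm), but the words ``Steinberg-symbol element'' should be replaced by this Bott-element construction, and the compatibility of transfer with the boundary map needs to be invoked explicitly.
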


\begin{theorem}\cite{MR315016}
  If $i>1$, then the $K$-group of finite field
  \begin{equation}
    K_{2i-1}(\mathbb{F}_q)\simeq \mathbb{Z}/(q^i-1), K_{2i}(\mathbb{F}_q)=0.
  \end{equation}
\end{theorem}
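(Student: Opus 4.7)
The plan is to follow Quillen's original computation via the Brauer lift and the action of Frobenius on the complex $K$-theory space. First, I would fix an embedding of the multiplicative group of $\overline{\mathbb{F}_q}$ into $\mathbb{C}^*$ (a Brauer lift) and use it to produce, after Quillen's plus construction, a map
\begin{equation}
\beta: BGL(\mathbb{F}_q)^+ \longrightarrow BU,
\end{equation}
where $BU$ is the classifying space of stable complex vector bundles. The $q$-power Frobenius $\Phi_q$ on $GL(\overline{\mathbb{F}_q})$ induces, via the same Brauer lift, the Adams operation $\psi^q$ on $BU$. The fixed points of $\Phi_q$ on $GL(\overline{\mathbb{F}_q})$ are precisely $GL(\mathbb{F}_q)$, so after $+$-construction one gets a homotopy pullback square identifying $BGL(\mathbb{F}_q)^+$ with the homotopy fiber of $\psi^q - 1$.

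Second, I would set up the long exact sequence of homotopy groups coming from the fiber sequence
\begin{equation}
BGL(\mathbb{F}_q)^+ \longrightarrow BU \xrightarrow{\ \psi^q - 1\ } BU.
\end{equation}
Using Bott periodicity, $\pi_{2i}(BU) \cong \mathbb{Z}$ and $\pi_{2i-1}(BU) = 0$, and a classical computation of Adams shows that $\psi^q$ acts on $\pi_{2i}(BU) \cong \mathbb{Z}$ by multiplication by $q^i$. Splicing these facts into the long exact sequence, the relevant portion reads
\begin{equation}
0 \to \pi_{2i}(BGL(\mathbb{F}_q)^+) \to \mathbb{Z} \xrightarrow{\, q^i - 1\,} \mathbb{Z} \to \pi_{2i-1}(BGL(\mathbb{F}_q)^+) \to 0,
\end{equation}
which immediately yields $K_{2i}(\mathbb{F}_q) = 0$ and $K_{2i-1}(\mathbb{F}_q) \cong \mathbb{Z}/(q^i-1)$ for $i \geq 1$.

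The main obstacle is not the long-exact-sequence bookkeeping but the fiber-sequence identification itself: one must justify that the homotopy fixed points of $\Phi_q$ on $BGL(\overline{\mathbb{F}_q})^+$ coincide, up to homotopy, with $BGL(\mathbb{F}_q)^+$, and that the Brauer lift indeed intertwines $\Phi_q$ with $\psi^q$. Quillen's resolution of this point uses the mod-$\ell$ cohomology of $GL_n(\overline{\mathbb{F}_q})$ for $\ell \neq p$ together with an explicit character-theoretic comparison showing $\beta$ is a mod-$\ell$ equivalence; the prime $p = \operatorname{char}(\mathbb{F}_q)$ is handled separately by noting that $GL(\overline{\mathbb{F}_q})$ is a filtered colimit of finite $p'$-groups, so $BGL(\overline{\mathbb{F}_q})^+$ is mod-$p$ acyclic in positive degrees. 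Once these two inputs are in place, the computation above goes through verbatim.
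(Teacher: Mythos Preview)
The paper does not give a proof of this theorem; it is simply stated with a citation to Quillen's original paper \cite{MR315016} and used as background. Your sketch is precisely Quillen's argument, and the overall outline is correct: the Brauer lift, the identification of $BGL(\mathbb{F}_q)^+$ with the homotopy fiber of $\psi^q - 1$ on $BU$, and the long exact sequence read off from Bott periodicity and the action of $\psi^q$ on $\pi_{2i}(BU)$ all go exactly as you describe.

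One small inaccuracy worth flagging: $GL(\overline{\mathbb{F}_q})$ is \emph{not} a filtered colimit of finite $p'$-groups---each $GL_n(\mathbb{F}_{q^m})$ has a nontrivial unipotent $p$-Sylow subgroup---so the mod-$p$ acyclicity you invoke cannot be obtained that way. Quillen instead establishes directly that the stable mod-$p$ homology of $GL(\mathbb{F}_q)$ vanishes (equivalently, that $BGL(\mathbb{F}_q)^+$ is mod-$p$ acyclic), which together with the mod-$\ell$ comparison for $\ell\neq p$ yields the homotopy equivalence with $F\psi^q$. This is a technical correction only and does not affect the structure or validity of your plan.
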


\subsection{Regulator maps and Dedekind zeta functions}

Borel \cite{MR506168} and Beilinson \cite{MR760999}
defined regulator maps 
$K_{2i-1}(\mathbb{C})\rightarrow\mathbb{R}^{i-1}$,
and Beilinson's is exactly twice Borel's \cite{MR1869655}.
We denote $reg_i$ the Beilinson's map.
It is compatible with natural actions of complex conjugation
on $K_{2i-1}(\mathbb{C})$ and $\mathbb{R}^{i-1}$.
For each embedding $\sigma:F\rightarrow\mathbb{C}$, we 
consider the composite homomorphism
\begin{equation}
  reg_{i,\sigma}:K_{2i-1}(O_F)\stackrel{\sigma_*}{\longrightarrow}K_{2i-1}(\mathbb{C})\stackrel{reg_i}{\longrightarrow}\mathbb{R}^{i-1},
\end{equation}
where $\sigma_*$ denotes the induced map on $K$-groups.

We recall some concepts and facts about Dedekind zeta functions \cite{MR3126639}.
Let $F$ be a number field.
For $Re(s)>1$, the Dedekind zeta function
\begin{equation}
  \zeta_K(s):=\sum_{\mathfrak{a}}(N_{K/\mathbb{Q}}(\mathfrak{a}))^{-s}
\end{equation}
where $\mathfrak{a}$ runs over all non-zero integral ideals of $O_F$.
It has Euler product representation
\begin{equation}
  \zeta_K(s)=\prod_{\mathfrak{p}}(1-(N_{K/\mathbb{Q}}(\mathfrak{p}))^{-s})^{-1}
\end{equation}
where $\mathfrak{p}$ runs over all prime ideals of $O_F$.

The function $\zeta_K(s)$ converges absolutely for $Re(s)>1$,
and has an analytic continuation to $\mathbb{C}\backslash\{1\}$
as follows. Let 
\begin{equation}
  \xi_F(s):=\left(\frac{|\Delta_F|}{4^{r_2}\pi^n}\right)^{s/2}\Gamma(s/2)^{r_1}\Gamma(s)^{r_2}\zeta_K(s),
\end{equation}
then $\xi_F$ has two simple poles, 0 and 1. The continuation is based on 
this equality:
\begin{equation}
  \xi_F(s)=\xi_F(1-s).
\end{equation}

The function $\zeta_K(s)$ vanishes at all negative even integers.
Moreover, we have Borel's more accurate theorem:
\begin{theorem}\cite{MR506168}
  Let $F$ be a number field, and $i\geq2$.
  \begin{enumerate}[(a)]
    \item Write $reg_{i,F}$ for the map 
    \begin{equation}
      K_{2i-1}(O_F)\rightarrow\prod_{\sigma:F\hookrightarrow\mathbb{C}}\mathbb{R}^{i-1},
      x\mapsto(reg_{i,\sigma}(x))_\sigma.
    \end{equation}
    Then the image of $reg_{i,F}$ is a full-rank lattice
    in $\mathbb{R}^{i-1}$.
    \item Let $R_i(F)$ be the covolume of the image of $reg_{i,F}$.
    Then $\zeta_F(s)$ vanishes to order $r_2$ (resp. $r_1+r_2$) at 
    $s=1-i$ if $i$ is even (resp. odd).
    \item Let $\zeta_F^*$ denote the leading coefficient in Taylor
    expansion. Then $\zeta_F^*(1-i)=q_{i,F}R_i(F)$ for some 
    $q_{i,F}$ in $\mathbb{Q}^*$.
  \end{enumerate}
\end{theorem}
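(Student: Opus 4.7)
The plan is to tackle parts (b), (a), (c) in that order: (b) follows from the functional equation in just a few lines, (a) is the substantive input coming from Borel's stable cohomology computation, and (c) combines both with a volume comparison.

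For part (b), I would read the order of vanishing directly off $\xi_F$. Since $\xi_F$ has only simple poles at $0$ and $1$ and satisfies $\xi_F(s)=\xi_F(1-s)$, and since $i\geq 2$ forces $1-i\notin\{0,1\}$, the function $\xi_F$ is finite and non-zero at $1-i$: non-vanishing at $s=i$ is immediate from the Euler product in the region of absolute convergence, and the functional equation transports this across. Hence the order of vanishing of $\zeta_F$ at $s=1-i$ equals the order of the pole of $\Gamma(s/2)^{r_1}\Gamma(s)^{r_2}$ there. A case split on the parity of $i$, using that $\Gamma$ has simple poles exactly at the non-positive integers, then yields order $r_2$ when $i$ is even (only $\Gamma(s)^{r_2}$ blows up at odd negative $s$) and order $r_1+r_2$ when $i$ is odd (both gamma factors blow up at even non-positive $s$).

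For part (a), which I expect to be the main obstacle, I would follow Borel's route via stable group cohomology. First I identify $K_{2i-1}(O_F)\otimes\mathbb{R}$ with the primitive part of $H^{2i-1}(GL(O_F),\mathbb{R})$ by Milnor--Moore applied to the $H$-space $BGL(O_F)^+$. Next I invoke Borel's stability theorem to reduce to the stable range, and use a van Est / Matsushima comparison to identify stable cohomology rationally with continuous cohomology of $G_\infty:=GL_n(F\otimes_\mathbb{Q}\mathbb{R})$, which in turn is computed by $G_\infty$-invariant differential forms on the associated symmetric space. Then I exhibit one primitive invariant form of degree $2i-1$ per archimedean place of $F$, subject to the complex-conjugation symmetry, check that pairing with these forms realises each $reg_{i,\sigma}$ rationally, and count dimensions to match the ranks from Theorem~2.6(4). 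Discreteness of the image, and hence its latticehood, then follows from arithmeticity of $\Gamma=GL(O_F)\subset G_\infty$.

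For part (c), I combine (a) and (b). By (a), $R_i(F)$ is a well-defined covolume, and the same $(\mathfrak{g},K)$-cohomological description expresses it as a product of the arithmetic covolume of $\Gamma$ in $G_\infty$ with an explicit transcendental constant built from powers of $\pi$ and of $2$. That covolume is evaluated in closed form in terms of $\zeta_F(i)$ via (the higher analogue of) the analytic class number formula, and the functional equation of part (b) then converts $\zeta_F(i)$ to $\zeta_F^*(1-i)$ up to powers of $\pi$, of $2$, and of $|\Delta_F|^{1/2}$. All these transcendental factors cancel against those in $R_i(F)$, leaving the asserted rational ratio $q_{i,F}$. The decisive technical step throughout is the van Est / Matsushima comparison in part (a), which requires Borel's stability theorem and is far from a routine calculation; everything else is either formal cohomological manipulation or an application of the functional equation.
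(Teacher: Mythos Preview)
The paper does not prove this theorem: it is stated with the citation \cite{MR506168} and no proof is given, as it is Borel's classical result quoted as background. Your outline is a reasonable sketch of how Borel's original argument proceeds (functional equation for the vanishing order, stable cohomology and the Matsushima--van Est comparison for the lattice statement, and a volume/covolume computation for the rationality of the ratio), but there is no ``paper's own proof'' to compare it against. If the intent was to supply a proof where the paper has none, be aware that the substantive step---the identification of stable arithmetic group cohomology with $(\mathfrak{g},K)$-cohomology and the explicit rank count---is a deep theorem in its own right and your sketch only names the ingredients without carrying out the comparison; as a citation summary it is fine, but it would not stand as a self-contained proof.
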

\begin{remark}
  We call $R_i(F)$ the $i$-th regulator of the field $F$.
\end{remark}

One would take interest in the exact value of $q_{i,F}$.
Lichtenbaum \cite{MR406981} conjectured
\begin{equation}
  |q_{i,F}|=2^{n_{i,F}}\frac{|K_{2m-2}(O_F)|}{|K_{2m-1}(O_F)_{tor}|}.
\end{equation}
Bloch and Kato \cite{MR1086888} put up a 
Tamagawa number conjecture. For some special cases, we already have
\begin{theorem} \cite{MR2002643}\cite{MR1992015}\cite{MR2863902}
  Bloch-Kato conjecture for $h^0(\mathrm{Spec}(F))(1-i)$ is valid if 
  $F$ is an abelian field.
\end{theorem}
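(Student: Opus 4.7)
The statement collects deep work of Huber--Kings, Burns--Greither, and Flach, so the plan is to outline how their arguments combine rather than to give a self-contained derivation. The overall strategy is to deduce the Bloch--Kato conjecture for $h^0(\mathrm{Spec}(F))(1-i)$ from the \emph{equivariant} Tamagawa number conjecture (ETNC) for this motive with coefficients in $\mathbb{Z}[\mathrm{Gal}(F/\mathbb{Q})]$, and then to attack the equivariant statement prime by prime using Iwasawa theory, where the abelian hypothesis gives us the full strength of the Main Conjecture.

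First I would record the standard reduction: forgetting the Galois action sends the ETNC over $\mathbb{Z}[\mathrm{Gal}(F/\mathbb{Q})]$ to Bloch--Kato over $\mathbb{Z}$, so it suffices to prove the equivariant version. By Kronecker--Weber, $F\subseteq\mathbb{Q}(\zeta_N)$ for some $N$; functoriality of ETNC under restriction to subfields of a finite abelian extension then reduces us to proving ETNC for the cyclotomic tower $\{\mathbb{Q}(\zeta_{p^n})\}$ at each prime $p$. The next step is to split the problem into $p$-adic components and treat each separately.

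For odd $p$, the central input is the Iwasawa Main Conjecture of Mazur--Wiles (alternatively proved by Rubin via the Euler system of cyclotomic units), which identifies the characteristic ideal of the relevant Iwasawa module with the $p$-adic $L$-function. Soul\'e's cyclotomic elements in $K_{2i-1}$, coupled with the Coleman map, convert this Iwasawa-theoretic equality into a finite-level statement comparing Borel's regulator with $\zeta_F^*(1-i)$. A careful Euler-characteristic descent down the cyclotomic tower then upgrades this to the integral refinement of Lichtenbaum's formula (1.25) that Bloch--Kato demands.

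The main obstacle will be the prime $p=2$: the $2$-part of the Main Conjecture is considerably subtler, and Flach's treatment requires careful bookkeeping of $\mu$-invariants, $2$-adic Hilbert symbols, and the archimedean places, together with an alignment of the real regulator with Deligne's period in the equivariant framework. Once the prime-by-prime analysis is complete, reassembling over $\mathbb{Z}$ and invoking the earlier compatibility of Beilinson's regulator with Borel's (so the $R_i(F)$ appearing in Borel's theorem matches the motivic side) yields the asserted Bloch--Kato conjecture for $h^0(\mathrm{Spec}(F))(1-i)$.
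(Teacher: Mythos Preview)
The paper does not give its own proof of this theorem: it is quoted as a known result, with proof deferred entirely to the cited references \cite{MR2002643}, \cite{MR1992015}, \cite{MR2863902} (Huber--Kings, Burns--Greither, and Flach). So there is no ``paper's own proof'' to compare against; the statement functions as background input used later in Proposition~1.15 and Example~1.16.

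That said, your outline is a faithful high-level summary of how those three references actually establish the result: reduce Bloch--Kato to the equivariant Tamagawa number conjecture with $\mathbb{Z}[\mathrm{Gal}(F/\mathbb{Q})]$-coefficients, use Kronecker--Weber to pass to cyclotomic fields, prove the $p$-part of ETNC for odd $p$ via the Iwasawa Main Conjecture (Mazur--Wiles/Rubin) and Soul\'e's cyclotomic elements, and handle $p=2$ separately using Flach's extension of the Burns--Greither argument. As a sketch this is fine. The only caveat is that your proposal is really a roadmap rather than a proof: each of the steps you name (the descent argument, the comparison of regulators and $p$-adic $L$-functions, the $2$-primary analysis) is itself a substantial paper, so what you have written is appropriate as an expository summary but would not be accepted as a self-contained proof. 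In the context of this paper, simply citing the references---as the author does---is the correct choice.
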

On the other hand, the validity of Bloch-Kato conjecture
is ralated to the value of $q_{i,F}$ in Theorem 1.13 \cite{MR4264211}.
In this article we focus on the corollary for $i=2$:

\begin{proposition}
  If Bloch-Kato conjecture is valid for motive
  $h^0(\mathrm{Spec}(F))(-1)$, then
  \begin{equation}
    q_{2,F}=(-2)^{r_1+r_2}\frac{|K_2(O_F)|}{|K_3(O_F)_{tor}|}.
  \end{equation}
\end{proposition}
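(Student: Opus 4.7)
The proposition is the $i=2$ specialization of the Bloch-Kato prediction for $q_{i,F}$ referenced just before Theorem 1.13. The plan proceeds in three stages. First, I would substitute $i=2$ into Lichtenbaum's identity $|q_{i,F}|=2^{n_{i,F}}|K_{2i-2}(O_F)|/|K_{2i-1}(O_F)_{tor}|$, so that the statement reduces, up to sign, to identifying $n_{2,F}=r_1+r_2$ and then refining the absolute value to a signed equality using the stronger Bloch-Kato hypothesis.

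Second, to obtain the exponent $r_1+r_2$, I would track how the Bloch-Kato formulation for the motive $h^0(\mathrm{Spec}(F))(-1)$ refines Lichtenbaum's bare absolute-value version. The Archimedean part of the local Tamagawa factor decomposes place by place: each of the $r_1$ real embeddings contributes a factor of $2$ from the complex-conjugation eigenspace decomposition of the Betti realization, while each of the $r_2$ complex places contributes a factor of $2$ through the fact recalled in Section 1.3 that Beilinson's regulator $reg_2$ is exactly twice Borel's. The covolume $R_2(F)$ in Theorem 1.20(c) is computed with respect to Beilinson's normalization, so these two contributions together explain the exponent $r_1+r_2$ in $2^{r_1+r_2}$.

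Third, for the sign I would use the functional equation $\xi_F(s)=\xi_F(1-s)$. The completed zeta function $\xi_F(s)=(|\Delta_F|/4^{r_2}\pi^n)^{s/2}\Gamma(s/2)^{r_1}\Gamma(s)^{r_2}\zeta_F(s)$ relates $\zeta_F^*(-1)$ to $\zeta_F(2)>0$; the $\Gamma$-factors $\Gamma(s/2)^{r_1}\Gamma(s)^{r_2}$ each have a simple pole at $s=-1$ coming from $\Gamma(s/2)$ (resp.\ $\Gamma(s)$), and taking the leading coefficient at $s=-1$ picks up a sign $(-1)^{r_1+r_2}$ from the residues $\mathrm{Res}_{s=-1}\Gamma(s/2)=-2$ and $\mathrm{Res}_{s=-1}\Gamma(s)=-1$. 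Combining with step two yields the signed identity $q_{2,F}=(-2)^{r_1+r_2}|K_2(O_F)|/|K_3(O_F)_{tor}|$.

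The main obstacle is step three: pinning down the sign demands careful bookkeeping of the $\Gamma$-factor residues together with the sign conventions in the Bloch-Kato statement for $h^0(\mathrm{Spec}(F))(-1)$, since an absolute-value Lichtenbaum-style argument cannot provide them. By contrast, the rank and torsion data in step one, and the power of $2$ in step two, are essentially linear-algebra consequences once the relevant realizations and the Beilinson/Borel normalization are identified.
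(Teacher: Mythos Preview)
The paper does not actually prove this proposition: it is stated as the $i=2$ specialization of the relationship between the Bloch--Kato conjecture and $q_{i,F}$, with the general statement attributed to \cite{MR4264211}. So there is no argument in the paper to compare against beyond ``quote the reference and set $i=2$.'' Your proposal, by contrast, attempts to sketch a genuine derivation, which is more ambitious than what the paper does.

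That said, your step three contains a concrete error. You claim that $\Gamma(s/2)$ has a simple pole at $s=-1$ with residue $-2$. It does not: the poles of $\Gamma(s/2)$ lie at $s=0,-2,-4,\dots$, and at $s=-1$ the function is regular with value $\Gamma(-1/2)=-2\sqrt{\pi}$. The sign contribution $(-1)^{r_1}$ from the real places is therefore produced by this negative \emph{value}, not by a residue. Your $\Gamma(s)^{r_2}$ analysis is fine (pole at $s=-1$, residue $-1$), and once the $\Gamma(s/2)$ issue is corrected the functional-equation computation does give $\mathrm{sign}\,\zeta_F^*(-1)=(-1)^{r_1+r_2}$, so the final sign in your conclusion survives. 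But the mechanism you describe is wrong as written.

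Your step two is also more heuristic than it should be. Attributing the factor $2^{r_2}$ to the Beilinson/Borel normalization and $2^{r_1}$ to the conjugation eigenspace decomposition is suggestive, but the actual determination of the power of $2$ in the Tamagawa-number formulation involves a careful comparison of integral structures on the Betti and de Rham realizations and on the motivic cohomology lattice; it is not simply a place-by-place count. Since the paper is content to cite \cite{MR4264211} for the full statement, it would be safer for you to do the same rather than to present an informal justification as a proof.
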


\begin{example}
  \begin{enumerate}[(a)]
    \item If $F$ is imaginary quadratic field, Then
    $|K_3(O_F)|$ is always 24,
    and $\zeta_F^*(-1)=-\frac{1}{12}|K_2(O_F)|R_2(F)$.
    \item Assume $F$ to be an imaginary cyclic quartic field.
    Then $|K_3(O_F)|$ is 48 if $\sqrt{2}\in F$,
    120 if $\sqrt{5}\in F$, and 24 otherwise.
    Therefore, $\frac{\zeta_F^*(-1)}{|K_2(O_F)|R_2(F)}=\frac{1}{6}$,
    $\frac{1}{12}$ and $\frac{1}{30}$ in these cases respectively.
  \end{enumerate}
\end{example}

\begin{remark}
  The calculation of $|K_2(O_F)|$ is also of great interest
  of $K$-theory researchers. 
  Belabas and Gangl \cite{MR2067570}
  collected theoretical methods and results of some
  quadratic and miscellaneous fields.
  For computational method, Tate \cite{MR429836} introduced
  a lemma, later applied by L. Zhang and K. Xu \cite{MR3454374}
  \cite{MR3958056} to prove triviality for some 
  imaginary cyclic quartic cases.
\end{remark}

\section{Vorono\"i theory and homological algebra}

In this chapter, we first introduce Vorono\"i reduction
theory, and its application to construct an orbifold,
which is quotient space of some products of hyperbolic 3-spaces
by group $PSL_2(O)$. Then we state some homological facts
of this orbifold and its relation to $K$-theory.

\subsection{Vorono\"i reduction theory}

We state some concepts and facts about perfectness of 
quadratic forms, and geometry related to them. \cite{MR1681629}\cite{MR3457984}

Let $Her_m(\mathbb{C})$ (resp. $Sym_m(\mathbb{R})$) denote the set of $m\times m$ complex Hermitian 
(resp. $m\times m$ real symmetric) matrices,
viewed as $m^2$-dimensional (resp. $m(m+1)/2$-dimensional) real vector space.
Let $F$ be a number field having $r_1$ real places and $r_2$ complex places.
By taking non-conjugate embeddings of $F$ into $\mathbb{C}$,
we can view $Her_m(F)$ as a subset of 
\begin{equation}
  S_{m,F}:=(Sym_{m}(\mathbb{R}))^{r_1}\times(Her_m(\mathbb{C}))^{r_2},
\end{equation}
which allows us to view $Her_m(F)$ as a $\mathbb{Q}$-vector space with
dimension $r_1\frac{m(m+1)}{2}+r_2m^2$, and the rational points of $S_{r_1,r_2}$
are exactly $Her_m(F)$.

Define a map $q:O_F^m\rightarrow Her_m(F)$ by the outer product
\begin{equation}
  q(x)=xx^*,
\end{equation}
where $x^*:=\bar{x}^T$. Every $A\in Her_m(F)$ defines an Hermitian form by
\begin{equation}
  A[x]:=Tr_{F/\mathbb{Q}}(x^*Ax), \forall x\in F^m.
\end{equation}
Define the bilinear map
\begin{equation}
  \langle*,*\rangle: S_{m,F}\times S_{m,F}\rightarrow \mathbb{C}, \langle A,B\rangle\mapsto Tr(AB).
\end{equation}
Then for $v\in O_F^m$ (identified with its image in $(\mathbb{R}^m)^{r_1}\times(\mathbb{C}^m)^{r_2}$), we have
\begin{equation}
  A[x]=Tr_{F/\mathbb{Q}}(\langle A,q(x)\rangle),
\end{equation}
which always take values in $\mathbb{Q}$.

Let $Her_m^{>0}(\mathbb{C})$ (resp. $Sym_m^{>0}(\mathbb{R})$) denote the cone of 
$m\times m$ positive definite Hermitian matrices
(resp. real symmetric matrices),
and $Her_m^{>0}(F)$ the set of totally positive definite Hermitian matrices over $F$,
viewed as a subset of 
\begin{equation}
  S_{m,F}^{>0}:=(Sym_{m}^{>0}(\mathbb{R}))^{r_1}\times(Her_m^{>0}(\mathbb{C}))^{r_2}.
\end{equation}

\begin{definition}
  \begin{enumerate}[(a)]
    \item For $A\in Her_m^{>0}(F)$, define the minimum of $A$ 
    \begin{equation}
      \mathrm{min}(A):=\displaystyle\inf_{x\in O_F^m\backslash\{0\}}A[x],
    \end{equation}
    and the set of minimum points 
    \begin{equation}
      \mathrm{Min}(A):=\{x\in O_F^m|A[x]=\mathrm{min}(A)\}.
    \end{equation}
    \item An Hermitian form $A\in Her_m^{>0}(F)$ is called perfect, if
    \begin{equation}
      \mathrm{span}_\mathbb{R}\{q(v)|v\in\mathrm{Min}(A)\}=S_{m,F}.
    \end{equation}
    \item Let $G$ be a linear group (possibly $GL_m,SL_m,GL_m(O),PSL_m(O)$, etc).
    Two Hermitian forms $A,A'\in Her_m^{>0}(F)$ are 
    $G(F)$-equivalent if
    there exists a matrix 
    $P$ in $G(F)$, such that 
    \begin{equation}
      \langle A,xx^*\rangle=\langle P^*A'P,xx^*\rangle
    \end{equation}
    for every $x\in F^m$.
  \end{enumerate}
\end{definition}

The set $q(O_F^m)$ is discrete in $S_{m,F}$ and the level set
$\{q(x)|\langle A,q[x]\rangle=C\}$ is compact for every $C$.
So $\mathrm{Min}(A)$ is always finite
if $A$ is totally positive definite. Besides, every vector 
$v\in O_F^m$ determines a linear functional $q(v)$ on $Her_m(F)$,
via $A\mapsto\langle A,q(v)\rangle$.

About perfectness we have the following theorems,
for version of perfect lattices \cite{MR2769341}
and for version of Hermitian forms \cite{MR1681629}:

\begin{theorem}
  \begin{enumerate}[(a)]
    \item An Hermitian form over $F$ is perfect, if and only if it is uniquely
    determined by $\mathrm{min}(A)$ and $\mathrm{Min}(A)$.
    \item Fix a totally real or CM field $F$ and dimension $m$. 
    If $A\in S_{m,F}$ is perfect with minimum 1,
    then $A$ is $F$-rational, i.e, $A\in Her_m(F)$. Moreover, 
    up to $GL_2(O_F)$-equivalence and positive homotheties,
    there are only finitely many $m$-ary perfect Hermitian forms over $F$.
  \end{enumerate}
\end{theorem}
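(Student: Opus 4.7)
The plan is to treat $S_{m,F}$ as a finite-dimensional real vector space equipped with the nondegenerate pairing $\langle\cdot,\cdot\rangle$ of (2.4), exploiting the $\mathbb{Q}$-structure given by $\mathrm{Her}_m(F)$. Both parts reduce to linear algebra in this space once combined with the discreteness of $q(O_F^m)$.

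For (a), I would observe that demanding $\mathrm{min}(A')=\mathrm{min}(A)$ and $\mathrm{Min}(A')=\mathrm{Min}(A)$ translates into the linear system $\langle A'-A,\,q(v)\rangle=0$ for all $v\in\mathrm{Min}(A)$. If $A$ is perfect, the $q(v)$'s span $S_{m,F}$, so $A'-A$ is orthogonal to every vector and therefore vanishes. Conversely, if $A$ is not perfect, I would pick a nonzero $B\in S_{m,F}$ orthogonal to every $q(v)$, $v\in\mathrm{Min}(A)$, and form $A_\varepsilon:=A+\varepsilon B$. Since $S_{m,F}^{>0}$ is open and $A_\varepsilon$ agrees with $A$ on $\mathrm{Min}(A)$, I only need to prevent new minimum vectors from appearing. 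Using the finiteness of the level set $\{w\in O_F^m:A[w]\le 2\,\mathrm{min}(A)\}$ and choosing $B$ generically within the orthogonal complement so as to avoid the finitely many codimension-one conditions of the form $\langle B,q(w)\rangle=\tfrac{\mathrm{min}(A)-A[w]}{\varepsilon}$, I can select $\varepsilon$ small enough that $\mathrm{min}(A_\varepsilon)=\mathrm{min}(A)$ and $\mathrm{Min}(A_\varepsilon)=\mathrm{Min}(A)$, exhibiting $A_\varepsilon\ne A$ with identical minimum data.

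For the $F$-rationality part of (b), I would extract from $\mathrm{Min}(A)$ a subset $v_1,\dots,v_d$ with $d=\dim_{\mathbb{R}}S_{m,F}$ such that the $q(v_i)$ form a basis of $S_{m,F}$; by perfectness such a subset exists. Then $A$ is determined by the linear system $\langle A,q(v_i)\rangle=1$, $i=1,\dots,d$. Since every $q(v_i)$ lies in $\mathrm{Her}_m(F)$ and the pairing restricts to $\mathbb{Q}$-values on $\mathrm{Her}_m(F)\times\mathrm{Her}_m(F)$ via (2.5), the matrix of this system in a $\mathbb{Q}$-basis of $\mathrm{Her}_m(F)$ is rational, the right-hand side is $(1,\dots,1)\in\mathbb{Q}^d$, and Cramer's rule forces $A\in\mathrm{Her}_m(F)$.

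The main obstacle, and the step I expect to be hardest, is the finiteness assertion. My plan is to deduce it from (a) by bounding the possible minimum sets up to $GL_m(O_F)$-action. Using reduction theory for positive-definite Hermitian forms over the integer ring of a totally real or CM field, I would argue that every $A\in S_{m,F}^{>0}$ is $GL_m(O_F)$-equivalent (after a positive homothety) to a form in a quasi-compact slice of $S_{m,F}^{>0}/\mathbb{R}_+$. Combined with $\mathrm{min}(A)=1$, a Hermite--Minkowski-type estimate then bounds each $v\in\mathrm{Min}(A)$ inside a fixed bounded region of $S_{m,F}$, which contains only finitely many points of the discrete lattice $O_F^m$. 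Hence only finitely many candidate sets $\mathrm{Min}(A)$ arise, and by (a) each one determines at most one perfect $A$, giving the claim. The totally-real/CM hypothesis enters precisely to guarantee that $O_F^m$ sits as a lattice of maximal rank in $S_{m,F}$ and that the group $GL_m(O_F)$ acts with a reduction domain of finite covolume, so that the standard lattice-point count is available.
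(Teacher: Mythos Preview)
The paper does not supply its own proof of this theorem; it records the statement with attributions to \cite{MR2769341} and \cite{MR1681629}. Your outline is the standard argument found in those references and is correct.

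One minor simplification in part (a): the generic choice of $B$ is unnecessary. Once $B$ lies in the orthogonal complement of $\{q(v):v\in\mathrm{Min}(A)\}$, taking $\varepsilon$ sufficiently small already forces $A_\varepsilon[w]>\mathrm{min}(A)$ for every $w\notin\mathrm{Min}(A)$: for the finitely many $w$ in the level set $\{A[w]\le 2\,\mathrm{min}(A)\}$ this is immediate by continuity, while for the remaining $w$ one uses the uniform bound $|B[w]|\le C\cdot A[w]$ (valid because $A$ is totally positive definite) to get $A_\varepsilon[w]\ge(1-\varepsilon C)A[w]>\mathrm{min}(A)$. Your rationality argument in (b) via Cramer's rule in a $\mathbb{Q}$-basis of $\mathrm{Her}_m(F)$ is exactly right, and your finiteness sketch through reduction theory for $GL_m(O_F)$ is the approach taken in the cited sources; the paper itself later packages this as the finiteness of the Vorono\"i graph modulo $GL_m(O_F)$ in Section~3.
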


\begin{remark}
    \begin{enumerate}[(a)]
    \item \cite{MR2537111}\cite{MR2300003}\cite{MR3457984}
    The perfect forms over general fields are closely related 
    to $T$-perfect forms over $\mathbb{Q}$.
    To calculate all $GL_2(O_F)$-equivalent classes of $m$-ary perfect forms 
    over $F$, one only need to calculate $mn$-ary rational 
    perfect forms that lie in a $r_1\frac{m(m+1)}{2}+r_2m^2$-dimensional subspace
    of $Sym_{(mn)^2}(\mathbb{Q})$, the $\mathbb{Q}$-vector space
    of $mn$-ary symmetric matrices.
    \item \cite{MR2466406}
    The algorithm for computing rational perfect forms,
    including $T$-perfect forms, are already known.
    For $F=\mathbb{Q}$ and $m\leq8$, all equivalence classes of 
    perfect quadratic fields are comfirmed. When it comes to 
    $m=8$, the complexity of computation becomes very high, make it 
    take months of time.
    \end{enumerate}
\end{remark}

From now on, we assume $F$ to be a CM field.
Then $r_1=0$, and $S_{m,F}=(Her_m(\mathbb{C}))^{r_2}$.

We next construct a partial compactification of the cone 
$(Her_m^{>0}(\mathbb{C}))^{r_2}$. \cite{MR3457984}\cite{MR1681629}
An element $A\in (Her_m(\mathbb{C}))^{r_2}$ is called to have an $F$-rational kernel,
if $\{x\in(C^m)^{r_2}|Ax=0\}$ is an $\mathbb{R}$-span of finitely many vectors in 
$F^m\subset(\mathbb{C}^m)^{r_2}$.
Let $C_{m,F}^*\subset(Her_m(\mathbb{C}))^{r_2}$
denote the subset of non-zero positive semi-definite Hermitian forms with 
$F$-rational kernel.
Then $C_{m,F}^*$ contains $(Her_m^{>0}(\mathbb{C}))^s$,
since all positive definite matrices have trivial kernel.

Next, let $\textbf{G}=Res_{F/\mathbb{Q}}GL_m$ be the 
reductive group over $\mathbb{Q}$. Then 
\begin{equation}
  \textbf{G}(\mathbb{Q})=GL_m(F),\textbf{G}(\mathbb{Z})=GL_m(O_F),\textbf{G}(\mathbb{R})=(GL_m(\mathbb{C}))^{r_2}.
\end{equation}
$\textbf{G}(\mathbb{R})$ acts on $C_{m,F}^*$ via
\begin{equation}
  (g,A)\mapsto gAg^*,\forall g\in(GL_m(\mathbb{C}))^{r_2},A\in C_{m,F}^*.
\end{equation}
Let $H$ be the identity component of real points of the split radical
of $\textbf{G}$. Then $H\simeq R_+$, and acts on $C_{m,F}^*$ by positive
real homotheties as a subgroup of $\textbf{G}(\mathbb{R})$.
Let
\begin{equation}
  \pi:C_{m,F}^*\rightarrow X_{m,F}^*:=H\backslash C_{m,F}^*
\end{equation}
be the projection. Then $X_{m,F}=\pi((Her_m^{>0}(\mathbb{C}))^{r_2})$
can be identified with the global Riemannian symmetric space for 
the product of $r_2$ copies of the reductive group 
$R_+\backslash GL_m(\mathbb{C})$, 
and $(R^*)^{r_2-1}$.
or in other word,
$SL_m(\mathbb{C})$. Then we have 
\begin{equation}
  X_{m,F}=(SL_m(\mathbb{C})/SU_m)^{r_2}\times (R_+)^{r_2-1}.
\end{equation}

We write the explicit quotient map $\pi$ as follows:
Let $A[x]$ be an $m$-ary positive definite quadratic form
over $F$. Then 
\begin{equation}
  A[x]\mapsto((\pi(\sigma(A)))_{\sigma},(\det(\sigma_1(A)):\dots:\det(\sigma_{r_2}(A))))
\end{equation}
where $\sigma=(\sigma_1,\dots,\sigma_{r_2})$ the 
non-conjugate complex
embeddings of $F$,
and $\sigma(A)$, as an $m$-ary positive definite Hermitian form,
is mapped to its image in Siegel's upper-half space
$\mathbb{H}_m$.

\begin{remark}
  Note that if $m=2$, we have $SL_2(\mathbb{C})/SU_2\simeq \mathbb{H}_3$,
  the hyperbolic 3-space (see the next section).
  Unless otherwise stated, we use the 
  upper-half space model of $\mathbb{H}_3$,
  i.e.
  \begin{equation}
    \mathbb{H}_3=\{z+rj|z\in\mathbb{C},r>0\},\partial(\mathbb{H}_3)=\mathbb{C}\cup\{\infty\},
  \end{equation}
  and equipped with the measure \cite{MR1937957}
  \begin{equation}
    ds^2=\frac{dx^2+dy^2+dr^2}{r^2},dV=\frac{dxdydr}{r^3}
  \end{equation}
  for line elements and volume elements respectively.

  If $m=2$, then the image of $\begin{pmatrix}
a & \bar{b}\\ b&c\end{pmatrix}\in Her_2^{>0}(\mathbb{C})$,
is $\frac{b}{a}+\frac{\sqrt{ac-|b|^2}}{a}j\in\mathbb{H}_3$.
Specially, the positive semidefinite matrix 
$(x,y)^T(\bar{x},\bar{y})$ is mapped to the cusp
$(y:x)+0j\in\partial(\mathbb{H}_3)$.
\cite{MR2721434}. 
\end{remark}

\subsection{A good classfying space of $PSL_2(O_F)$}

Recall the linear group action of $GL_2(\mathbb{C})$ on 
$\mathbb{H}_3$ is as follows \cite{MR2721434}\cite{MR1483315}:
For $g=\begin{pmatrix}  a&b\\c&d\end{pmatrix}\in GL_2(\mathbb{C})$,
and $z+rj\in\mathbb{H}_3$ ($z\in\mathbb{C}$, $r>0$), define
\begin{equation}
  g(z+rj)=\frac{((az+b)(\overline{cz+d})+(ar)(\overline{cr}))+(|ad-bc|r)j}{|cz+d|^2+|c|^2r^2}
\end{equation}
The diagonal matrices act trivially on $\mathbb{H}_3$,
and the stabilizer group of $i+j$ is $U_2$.
Then $\mathbb{H}_3$ can be identified with coset space
$GL_2(\mathbb{C})/(\mathbb{R}_+\cdot U_2)\simeq SL_2(\mathbb{C})/SU_2.$
Let $Y_{2,F}$ be the $(\mathbb{H}_3)^{r_2}$
part of $X_{2,F}$.

The isometric group preserving
orientation $Isom^+(\mathbb{H}_3)$ is exactly
$PGL_2(\mathbb{C})$.
More generally, $PGL_2(\mathbb{C})^{r_2}$ is subgroup
of $Isom^+((\mathbb{H}_3)^{r_2})$ with index $r_2!$.

The group $GL_2(F)$ acts on $(\mathbb{H}_3)^{r_2}$
as follows. \cite{MR4279905} 
Let $g\in GL_2(F)$. We regard $g$ as the same as
its image induced by the product of embedding maps
\begin{equation}
  GL_2(F)\rightarrow\prod_{\sigma:F\rightarrow\mathbb{C}}GL_2(\mathbb{C}),g\mapsto(\sigma(g))_\sigma
\end{equation}
where $\sigma$ runs over $r_2$ non-conjugate embeddings.
Then we have that
$SL_2(O_F)$ acts freely and properly discontinously on $(\mathbb{H}_3)^{r_2}$.
Since homotheties acts on hyperbolic space trivially,
this can be reduced to $PSL_2(O_F)$-action on $(\mathbb{H}_3)^{r_2}$.

Moreover, we have the following corollary from 
Borel's volume formula \cite{MR616899}:
\begin{theorem}
  Let $Q_{2,F}$ denote the quotient space $PSL_2(O_F)\backslash Y_{2,F}$. Then
  \begin{equation}
  vol(Q_{2,F})=2^{1-3r_2}\pi^{-2r_2}|\Delta_F|^{3/2}\zeta_F(2).
  \end{equation}
\end{theorem}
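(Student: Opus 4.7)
The plan is to derive the formula as a direct specialization of Borel's general covolume formula \cite{MR616899} for arithmetic subgroups of semisimple Lie groups, applied to the $\mathbb{Q}$-group $\mathbf{G}=\mathrm{Res}_{F/\mathbb{Q}}SL_2$ with $F$ a CM field.

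First I would fix the geometric setup: since $r_1=0$, the real points $\mathbf{G}(\mathbb{R})=SL_2(\mathbb{C})^{r_2}$ act on the symmetric space $Y_{2,F}\cong(\mathbb{H}_3)^{r_2}$ via the identification $SL_2(\mathbb{C})/SU_2\cong\mathbb{H}_3$ recalled in the remark above, with the standard hyperbolic volume form $dV=dx\,dy\,dr/r^3$ on each factor. The arithmetic group $SL_2(O_F)$ embeds diagonally via the $r_2$ non-conjugate complex embeddings of $F$, and its center $\{\pm I\}$ acts trivially, so the action factors through $PSL_2(O_F)$ with the same quotient measure on $Q_{2,F}$.

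Next I would invoke Borel's formula, which expresses this covolume as a product of three ingredients: a global discriminant term $|\Delta_F|^{3/2}$ (the exponent $3/2$ reflecting $\dim_\mathbb{R}SL_2(\mathbb{C})=3$ and the measure-theoretic comparison between the adelic and archimedean Haar normalizations), the value $\zeta_F(2)$ obtained as the inverse Euler product $\prod_{\mathfrak{p}}(1-\mathrm{N}\mathfrak{p}^{-2})^{-1}$ over the non-archimedean local factors, and an archimedean constant depending only on $r_2$. The central calculation is then to identify this archimedean constant: tracing Borel's normalized Tamagawa measure through the projection $SL_2(\mathbb{C})\to SL_2(\mathbb{C})/SU_2=\mathbb{H}_3$ at each infinite place, one finds that each complex place contributes a factor of $(8\pi^2)^{-1}$ with respect to $dV$, and an overall global factor of $2$ arises from the Tamagawa-measure normalization together with $\tau(SL_2)=1$. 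Multiplying yields $2\cdot(8\pi^2)^{-r_2}=2^{1-3r_2}\pi^{-2r_2}$, and assembling the three ingredients gives the stated formula.

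The main obstacle is the bookkeeping at the archimedean places: converting Borel's formula from its original Tamagawa-measure formulation into an explicit multiple of $dV$ on $(\mathbb{H}_3)^{r_2}$ requires careful tracking of several natural but distinct measures on $SL_2(\mathbb{C})$ (Tamagawa, bi-invariant Haar, and the induced Riemannian measure on the symmetric space). A reliable sanity check is the imaginary quadratic case $r_2=1$, where the formula must reduce to Humbert's classical identity $\mathrm{vol}(PSL_2(O_K)\backslash\mathbb{H}_3)=|d_K|^{3/2}\zeta_K(2)/(4\pi^2)$; once the normalization has been pinned down there, the extension to general $r_2$ is a direct product computation over the complex places.
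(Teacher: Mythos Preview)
Your proposal is correct and aligns with the paper's treatment: the paper does not give an independent proof but simply records this as a consequence of Borel's volume formula \cite{MR616899}, exactly as you invoke it. Your sketch supplies the measure-normalization details that the paper omits, and the Humbert sanity check at $r_2=1$ is a sound way to pin down the archimedean constant.
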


\begin{corollary}
  $Q_{2,F}$ is a classifying space of $PSL_2(O_F)$.
\end{corollary}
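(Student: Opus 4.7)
The plan is to verify the three hypotheses in Definition 1.10(1) for the data $G=PSL_2(O_F)$ and $X=Y_{2,F}$: namely contractibility of $X$, a free and properly discontinuous $G$-action, and paracompactness of $G\backslash X$.

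First I would check contractibility. In the upper-half-space model each factor $\mathbb{H}_3\cong\mathbb{C}\times\mathbb{R}_{+}$ is homeomorphic to $\mathbb{R}^{3}$ and is therefore contractible, and a finite product of contractible spaces is contractible, so $Y_{2,F}=(\mathbb{H}_3)^{r_2}$ is contractible.

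Next I would deduce the required group-theoretic properties of the action directly from the discussion preceding Theorem 2.4. The embedding $O_F\hookrightarrow\prod_{\sigma}\mathbb{C}=\mathbb{C}^{r_2}$ realizes $O_F$ as a lattice, so $PSL_2(O_F)$ sits inside the Lie group $PSL_2(\mathbb{C})^{r_2}$ as a discrete subgroup; and since $PSL_2(\mathbb{C})^{r_2}$ acts on $(\mathbb{H}_3)^{r_2}=(SL_2(\mathbb{C})/SU_2)^{r_2}$ with compact point stabilizers $(PU_2)^{r_2}$, the induced action of the discrete subgroup $PSL_2(O_F)$ is properly discontinuous. The freeness is exactly what the paragraph before Theorem 2.4 has already established. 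Paracompactness then follows from Theorem 2.4 itself: the volume formula gives $\mathrm{vol}(Q_{2,F})=2^{1-3r_2}\pi^{-2r_2}|\Delta_F|^{3/2}\zeta_F(2)<\infty$, and since the action is free and properly discontinuous, $Q_{2,F}$ inherits a smooth manifold structure from $Y_{2,F}$; a manifold of finite Riemannian volume is second countable, hence paracompact.

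The main obstacle in this plan is the freeness claim. Strictly speaking, $PSL_2(O_F)$ may contain torsion elements with fixed points in $(\mathbb{H}_3)^{r_2}$ (already for imaginary quadratic $F$, where elements like the image of $\begin{pmatrix}0&-1\\1&0\end{pmatrix}$ fix the point $j$), so the action is not literally free in the sense of Definition 1.10(1) without a caveat. I would handle this either by appealing to the preceding paragraph as written, or, if pressed, by passing to a torsion-free finite-index subgroup $\Gamma\subset PSL_2(O_F)$ (which exists by Selberg's lemma), applying the argument to $\Gamma\backslash Y_{2,F}$, and then interpreting $Q_{2,F}$ as an orbifold classifying space---which is the role it actually plays in the homological-algebra computations carried out in the later sections.
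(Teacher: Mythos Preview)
Your approach is essentially the same as the paper's: both verify the conditions of Definition~1.10(1) by noting that $(\mathbb{H}_3)^{r_2}$ is contractible, invoking the free and properly discontinuous action asserted just before Theorem~2.4, and then arguing paracompactness of the quotient (the paper does this via ``$\overline{Q_{2,F}}$ is compact, hence $Q_{2,F}$ is metric''; you do it via finite volume and second countability). Your caveat about freeness is well taken and in fact more honest than the paper's own proof, which simply asserts that $Y_{2,F}$ is a universal cover of $Q_{2,F}$ without addressing the torsion in $PSL_2(O_F)$; your suggested fix via Selberg's lemma or an orbifold interpretation is the standard way to make this rigorous.
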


\begin{proof}
  We see that $Y_{2,F}=(\mathbb{H}_3)^{r_2}$ is contractible
  $PSL_2(O_F)$-space. Also,
  $Y_{2,F}$ is a universal covering space for $Q_{2,F}$.
  Hence $Q_{2,F}$ is also a metric space.
  Besides, the closure $\overline{Q_{2,F}}$ of
  $Q_{2,F}$ is compact.
  The proof is complete.
\end{proof}

\subsection{Self-adjoint homogeneous cones and homology}

We next give a discription of the orbifold $PSL_2(O_F)\backslash Y_{2,F}$,
by constructing a complex structure of it.

\subsection{General Vorono\"i reduction theory}

This part follows from Gunnells \cite{MR1681629}.

Let $V$ be a real vector space defined over $\mathbb{Q}$,
and let $C\subset V$ be an open cone (i.e. $C$ contains no straight line
and closed under positive real homotheties.)
The cone $C$ is called self-adjoint, if there exists a scalar
product $\langle*,*\rangle:C\times C\rightarrow\mathbb{R}$,
such that
\begin{equation}
  C=\{x\in V|\langle x,y\rangle>0,\forall y\in\overline{C}\backslash\{0\}\}.
\end{equation}
Let $G$ denote the connected subgroup of $GL(V)$
that keeps $C$ stable. If $G$ acts transitively on $C$,
we say $C$ is homogeneous. If $K$ denotes the isotropy
group of a fixed point in $C$, then we can identify $C$ with $G/K$.
Since $C$ is self-adjoint, we have that $G$ is reductive,
and $C$ modulo homotheties is a Riemannian symmetric space.

We further assume that $G$, as a subgroup of $GL(V)$,
is defined by rational equations and the scalar product
$\langle x,y\rangle$ is defined over $\mathbb{Q}$.
Then we have
\begin{theorem}\cite{MR1446489}
  $G(\mathbb{R})$ is isomorphic to a product of following possible groups:
  $GL_n(\mathbb{R})$, $GL_n(\mathbb{C})$, $GL_n(\mathbb{H})$, $O(1,n-1)\times\mathbb{R}^*$,
  and the noncompact Lie group with Lie algebra $\mathfrak{e}_{6(-26)}\oplus\mathbb{R}$.
\end{theorem}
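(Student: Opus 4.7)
The plan is to derive this classification from the Koecher--Vinberg correspondence between self-adjoint homogeneous cones and formally real (Euclidean) Jordan algebras, combined with the Jordan--von Neumann--Wigner classification of the latter. The strategy is to transport all of the analytic data (the scalar product, the transitive $G$-action, the positivity) into algebraic data on $V$, apply the classical list of simple Jordan algebras, and then read off $G(\mathbb{R})$ as the identity component of the associated structure group.

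First, I would endow $V$ with a Euclidean Jordan algebra structure whose cone of squares coincides with $C$. Fix a base point $e\in C$; since $G(\mathbb{R})$ acts transitively with some isotropy $K$, one has $C\cong G(\mathbb{R})/K$ and $T_eC\cong V$. Using self-adjointness of $\langle*,*\rangle$, one shows that for each $x\in V$ there is a unique self-adjoint element $L(x)$ in the Lie algebra $\mathfrak{g}$ of $G$ with $L(x)\cdot e=x$; the product $x\circ y:=L(x)y$ is then commutative, and the Jordan identity $L(x^2)L(x)=L(x)L(x^2)$ follows from the commutation of the left multiplications at $e$ together with self-adjointness. The positivity condition $\langle x,y\rangle>0$ for $y\in\overline{C}\setminus\{0\}$ translates into formal reality, i.e.\ $\sum x_i^2=0$ forces every $x_i=0$. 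This is the content of the Koecher--Vinberg theorem.

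Second, I would apply the Jordan--von Neumann--Wigner classification: any formally real Jordan algebra decomposes uniquely as a direct sum of simple ones, and the simple Euclidean Jordan algebras are exhausted by five families: $Sym_n(\mathbb{R})$, $Her_n(\mathbb{C})$, $Her_n(\mathbb{H})$, the spin factor $\mathbb{R}\oplus\mathbb{R}^{n-1}$ (with Jordan product built from a Lorentzian form), and the exceptional $27$-dimensional Albert algebra $Her_3(\mathbb{O})$. Such a direct-sum decomposition corresponds to writing $C$ as a product of self-adjoint homogeneous cones, hence $G(\mathbb{R})$ as a product of structure groups, so the theorem reduces to identifying the identity component of the structure group in each of the five simple cases. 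These are standard inspections: $GL_n(\mathbb{R})$ acts on $Sym_n^{>0}(\mathbb{R})$ via $X\mapsto gXg^T$; $GL_n(\mathbb{C})$ on $Her_n^{>0}(\mathbb{C})$ via $X\mapsto gXg^*$; $GL_n(\mathbb{H})$ acts analogously on the quaternionic Hermitian cone; the spin factor yields the forward Lorentz cone, whose full linear stabilizer is $O(1,n-1)\times\mathbb{R}^*$ (the $\mathbb{R}^*$ accounting for dilations along the axis); and the Albert algebra yields the exceptional cone whose linear automorphisms form the noncompact real form of $E_6$ extended by scalar dilations, with Lie algebra $\mathfrak{e}_{6(-26)}\oplus\mathbb{R}$.

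The principal obstacle is the first step, namely producing a Jordan algebra from the bare data of a self-adjoint homogeneous cone: the Jordan identity is not a transparent consequence of homogeneity plus self-adjointness. The cleanest route is to introduce the \emph{quadratic representation} $P(x):=2L(x)^2-L(x^2)$ and verify that $P(x)$ is the unique element of $G(\mathbb{R})$ sending $e$ to $x^2$ that is symmetric with respect to $\langle*,*\rangle$; the Jordan identity then follows from the fundamental formula $P(P(x)y)=P(x)P(y)P(x)$, which one checks geodesically in the symmetric space $C=G(\mathbb{R})/K$. Once this correspondence is set up, the remainder of the argument is bookkeeping against the Jordan--von Neumann--Wigner list, and the rational structure hypothesis plays no role beyond guaranteeing that the resulting Jordan algebra and its structure group are themselves defined over $\mathbb{Q}$.
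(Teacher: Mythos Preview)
The paper does not prove this theorem at all: it is stated with a citation to \cite{MR1446489} (Ash et al.) and used as a black box, with only the subsequent remark that in each case $V$ is a space of Hermitian or symmetric matrices and $C$ the positive-definite cone. So there is no ``paper's own proof'' to compare against.

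Your outline is the standard route to this classification and is essentially correct. The Koecher--Vinberg theorem gives the equivalence between self-adjoint homogeneous open convex cones and Euclidean Jordan algebras, the Jordan--von Neumann--Wigner theorem supplies the list of simple ones, and the identification of the structure groups in each of the five cases is as you say. One small point: the group $G$ in the paper is defined as the connected component of the linear automorphism group of $C$, so in each simple factor you should take the identity component (e.g.\ $GL_n(\mathbb{R})^+$ rather than all of $GL_n(\mathbb{R})$, and $SO^+(1,n-1)\times\mathbb{R}_+$ rather than $O(1,n-1)\times\mathbb{R}^*$); the paper's statement is slightly loose on this, and your write-up inherits that looseness. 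Also, your final sentence is not quite right: the rationality hypothesis does matter for the statement as phrased, since without it one could have an irreducible self-adjoint homogeneous cone over $\mathbb{R}$ whose automorphism group is, say, $GL_n(\mathbb{C})$ viewed as a real group, but the point of the theorem is that $G(\mathbb{R})$ decomposes as a product over the simple Jordan factors, and that decomposition is what the rational structure guarantees is compatible with the $\mathbb{Q}$-structure on $V$. This is a minor issue of phrasing rather than a gap in the mathematics.
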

\begin{remark}\cite{MR1446489}
In each case of the theorem above,
 $V$ is a set of some Hermitian or real symmetric matrices,
and the cone $C$ is the subset of "positive-definite" matrices
in an appropriate sense.
\end{remark}

Next, let $H$ be a hyperplane in $V$. We say $H$ is a rational supporting hyperplane
of $C$, if $H$ is the closure of its rational subset, and
$H\cap C=\emptyset$ but $H\cap\overline{C}\neq\emptyset$.
In this case, Let $C'$ be the interior of $C$. Then $C'$ is a self-sdjoint
homogeneous cone of smaller dimension than $C$,
and we say $C'$ is a rational boundary component.
\begin{definition}
  A rational boundary component of $C$ is called a cusp,
  if its dimension is 1. Let $\Xi(C)$ the set of cusps of $C$.
\end{definition}

Let $L\subset V(\mathbb{Q})$ a lattice
(a discrete subgroup of $V(\mathbb{Q})$
satisfying $L\otimes\mathbb{Q}=V(\mathbb{Q})$).
Let $\Gamma_L$ denote the subgroup of $G(\mathbb{Q})$
preserving $L$. 
\begin{definition}
  \begin{enumerate}[(a)]
    \item Let $H,K$ be subgroups of $G$.
    We say $H$ and $K$ are commensurable
    if $H\cap K$ is of finite index in both $H$ and $K$.
    \item Let $H$ be a subgroup of $G$.
    We say $H$ is arithmetic subgroup of $G$ if it is discrete, and 
    commensurable with $\Gamma_L$ for some $L$.
  \end{enumerate}
\end{definition}

Any torsion-free subgroup $\Gamma\subset\Gamma_L$ of finite index
acts freely and discontinously on $C$. Therefore,
the quotient $\Gamma\backslash C$ is an Eilenberg-Mac Lane space
for $\Gamma$, and the group cohomology $H^*(\Gamma)$
is the $H^*(\Gamma\backslash C)$.

Let $A\subset V(\mathbb{Q})$ be a finite set of nonzero points.
The closed convex hull $\Sigma$ of rays
$\{\mathbb{R}_+x|x\in A\}$ is called a rational polyhedral cone.
The rays through the vertices of the convex hull of $A$
are called the spanning rays of $\Sigma$. Denote $R(\Sigma)$ its
spanning rays. Then the group $G(\mathbb{Q})$ acts naturally
on the set of rational polyhedral cones.

We hope to separate $C$ into some convex subsets of
rational polyhedral cones, compatible with $\Gamma_L$-action.
We keep in mind that $C$ is open and any $\Sigma$
is closed.

\begin{definition}\cite{MR427490}
  Let $\Gamma\subset\Gamma_L$ be an arithmetic subgroup of $G$.
  A set of closed polyhedral cones $\{\Sigma_\alpha\}_{\alpha\in I}$
  is called a $\Gamma$-admissible decomposition of $C$, if the following
  condition hold:
  \begin{enumerate}[(a)]
    \item Each $\Sigma_\alpha$ is the span of a finite number of rational rays.
    \item For every $\alpha$, $\Sigma_\alpha\subset\overline{C}$.
    \item Every face of a $\Sigma_\alpha$ is a $\Sigma_\beta$, $\beta\in I$.
    \item $\Sigma_\alpha\cap\Sigma_\beta$ is a common face of $\Sigma_\alpha$ and $\Sigma_\beta$.
    \item For every $\alpha\in I$ and $\gamma\in\Gamma$, $\gamma\Sigma_\alpha$
    is some $\Sigma_\beta$, $\beta\in I$.
    \item The number of $\Sigma_\alpha$s, modulo $\Gamma$, is finite.
    \item $C=\cup_\alpha(\Sigma_\alpha\cap C)$.
  \end{enumerate}
\end{definition}

We state a technique to construct a $\Gamma$-admissible decomposition,
which is firstly described by Vorono\"i \cite{MR1580754},
developed by Koecher \cite{MR0124527} and generalized by 
Ash \cite{MR427490} to all self-adjoint homogeneous cones.
\begin{definition}
  The Vorono\"i polyhedron $\Pi$ is the closed convex hull of
  $(L\backslash\{0\})\cap\Xi(C)$.
\end{definition}

\begin{theorem}\cite{MR427490}
  The cones over faces of $\pi$ form a $\Gamma$-admissible decomposition of $C$.
\end{theorem}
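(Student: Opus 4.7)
The plan is to check the seven conditions of the preceding definition directly, using a minimum-vector characterization of the faces of $\Pi$. For each $A \in C$ set
\begin{equation*}
m(A) := \inf_{v \in (L\setminus\{0\}) \cap \overline{\Xi(C)}} \langle A, v\rangle, \qquad M(A) := \{v : \langle A, v\rangle = m(A)\}.
\end{equation*}
Self-adjointness of $C$ forces $\langle A,\cdot\rangle$ to be proper and positive on $\overline{C}\setminus\{0\}$, so $M(A)$ is finite and $m(A) > 0$. Define the Voronoï cone $\sigma(A) := \mathrm{cone}(M(A))$. The key observation is that $\mathrm{conv}(M(A))$ is precisely the face of $\Pi$ on which the linear functional $\langle A,\cdot\rangle$ attains its minimum, and conversely every face of $\Pi$ arises in this way for some $A$ in the relative interior of the associated dual cone. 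This sets up a bijection between faces of $\Pi$ and cones $\sigma(A)$, and reduces the theorem to verifying the seven conditions on the family $\{\sigma(A)\}$.

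Conditions (a) and (b) are then immediate: $M(A)$ is a finite set of rational vectors lying in $\overline{C}$. Condition (c) holds because a face of $\sigma(A)$ is the cone on a subset $M(A') \subset M(A)$ cut out by an additional linear inequality, and such an $A'$ can be produced by a small perturbation of $A$ inside $C$. Condition (d) is the standard fact that intersections of faces of a convex polyhedron are themselves common faces. Condition (e) is clear, since each $\gamma \in \Gamma$ preserves $L$ and rescales $\langle\cdot,\cdot\rangle$ by a positive factor, so it permutes the configurations $M(A)$ and therefore the Voronoï cones.

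The substance of the theorem lies in conditions (g) and (f). For (g), I would show every $A \in C$ lies in $\sigma(A)$: the dual cone $\sigma(A)^\vee$ consists of forms $B$ with $\langle B, v\rangle \geq m(A)$ for all $v \in M(A)$ and so contains $A$ tautologically, and then the self-duality $C = C^\vee$ translates membership in the dual into membership in $\sigma(A)$ itself. The main obstacle is condition (f), the finiteness of the faces of $\Pi$ modulo $\Gamma$. Via the bijection above, this reduces to finiteness of perfect forms up to $\Gamma$-equivalence and positive homothety. The proof one would give rests on reduction theory for arithmetic groups acting on self-adjoint cones: a Siegel-set fundamental domain for $\Gamma$ on $C$ meets only finitely many Voronoï cones, because perfect forms of bounded minimum in the fundamental domain form a relatively compact subset in which they are isolated, and isolated points in a compact set are finite in number. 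Establishing this local finiteness of $\Pi$ — that every compact subset of $C$ meets only finitely many $\sigma(A)$ — is the technical heart of the argument, and this is precisely the input for which Ash's reduction theory is indispensable.
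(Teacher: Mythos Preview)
The paper does not prove this theorem at all; it is simply quoted from Ash \cite{MR427490} with no argument. So there is no ``paper's own proof'' to compare your sketch against---your proposal is effectively an attempt to reconstruct Ash's proof, not to match anything in this paper.

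That said, your sketch contains a genuine gap in condition (g). You want to show $A\in\sigma(A)=\mathrm{cone}(M(A))$, and you argue: $A\in\sigma(A)^{\vee}$ tautologically (true, since $\langle A,v\rangle=m(A)>0$ for $v\in M(A)$), and then ``the self-duality $C=C^{\vee}$ translates membership in the dual into membership in $\sigma(A)$ itself.'' This inference is invalid. Self-duality is a statement about the full cone $C$, not about the subcone $\sigma(A)$; for any proper closed subcone $\sigma\subset\overline{C}$ one has $\sigma^{\vee}\supset C^{\vee}=C\supset\sigma$, so $\sigma^{\vee}$ is typically much larger than $\sigma$, and $A\in\sigma(A)^{\vee}$ tells you nothing about $A\in\sigma(A)$. (Concretely: in the self-dual cone $C=\mathbb{R}_{>0}^{2}$, if $\sigma$ is the single ray through $(1,0)$ then $\sigma^{\vee}$ is the entire closed half-plane $\{x_{1}\ge 0\}$.) Also, your description of $\sigma(A)^{\vee}$ as $\{B:\langle B,v\rangle\ge m(A)\}$ is not the dual cone; the dual cone has $\ge 0$, not $\ge m(A)$.

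The covering statement (g) is a real theorem and cannot be obtained by formal duality alone. The standard route is geometric: one shows that the ray $\mathbb{R}_{+}A$ meets $\partial\Pi$ for every $A\in C$ (equivalently, that $\Pi$ contains a translate $x_{0}+\overline{C}$ for some $x_{0}\in C$), which in turn rests on the fact that the lattice points on cuspidal rays are sufficiently spread out. This is exactly the kind of argument Ash carries out, and it genuinely uses the arithmetic structure of $L$, not just the self-adjointness of $C$. Your treatment of (a)--(e) and your identification of (f) as the step requiring reduction theory are fine; it is the shortcut for (g) that does not work.
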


\subsection{Number field cases}

Let $F$ be a number field having $r_1$ real embeddings and
$r_2$ pairs of complex embeddings.
We apply the reduction theory above to discuss about the Vorono\"i 
decomposition associated to linear group of $F$.
Let 
\begin{equation}
  V=Her_m(F)\otimes_\mathbb{Q}\mathbb{R}=(Sym_m(\mathbb{R}))^{r_1}\times(Her_m(\mathbb{C}))^{r_2},
\end{equation}
viewed as an $\mathbb{R}$-vector space with dimension
$n(r_1+2r_2)$. Let $C$ be the open cone in $V$,
the set consisting of totally positive-definite elements,
i.e. 
\begin{equation}
  C=(Sym_m^{>0}(\mathbb{R}))^{r_1}\times(Her_m^{>0}(\mathbb{R}))^{r_2}.
\end{equation}
Then the cone $C$ is self-adjoint equipped with scalar product
\begin{equation}
  \langle A,B\rangle=Tr(A_1B_1+\dots+A_{r_1+r_2}B_{r_1+r_2})
\end{equation}
where $A=(A_1,\dots,A_{r_1+r_2})$, $B=(B_1,\dots,B_{r_1+r_2})$.
Let $G=GL_2^{>0}(\mathbb{R})^{r_1}\times GL_2(\mathbb{C})^{r_2}$,
where $GL_2^{>0}(\mathbb{R})$ is the subgroup of $GL_2(\mathbb{R})$
consisting of matrices whose equivalues are both positive.
The action of $G$ on $C$ is as follows:
\begin{equation}
  g(M):=g^*Mg,\forall g\in G,M\in C.
\end{equation}
Then $G$ acts transitively on $C$. So $C$ is homogeneous.
The isotropy group of identity matrix in $C$ is 
\begin{equation}
  K=(O_{m})^{r_1}\times(U_{m})^{r_2}.
\end{equation}
Therefore we can identify $C$ with $G/K$,
and 
\begin{equation}
  G(\mathbb{R})=GL_2(\mathbb{R})^{r_1}\times GL_2(\mathbb{C})^{r_2}.
\end{equation}
Let $L=V(\mathbb{Z})=Her_m(O_F)$, viewed as its image
in $V(\mathbb{R})$. Then $L$ is a lattice in $V(\mathbb{Q})$.
Let $\Gamma_L=GL_m(O_F)$.
The center of $\Gamma_L$ must be of finite order.
Moreover, the homotheties commute with the action of 
$\Gamma$. So we can pass to $Y:=\mathbb{R}_+^{r_1+r_2}\backslash C$,
and let $\Gamma$ be a any torsion-free subgroup
of $\Gamma_L$ with finite index,
and compute the group 
cohomology $H^*(\Gamma)$ by dealing with $H^*(\Gamma\backslash Y)$.

The set of cusps is 
\begin{equation}
  \Xi(C)=\{\mathbb{R}_+q(x)|a\in F^m\},
\end{equation}
and the Vorono\"i polyhedron $Pi$ is the closed convex hull of 
\begin{equation}
  L'\cap\Xi(C)=\{\mathbb{R}_+q(x)|x\in O_F^m\}.
\end{equation}

The Vorono\"i cones in a $\Gamma$-admissible decomposition of $C$,
modulo homotheties, become Voromo\"i decomposition
of $Y$ associated to $\Pi$. The open cells in $X$ are called Vorono\"i 
cells. 
Data is known for small $m$ for $F=\mathbb{Q}$ \cite{MR1463705}\cite{MR1116231},
and $F$ is imaginary quadratic while $m=2$ \cite{MR2721434}.

\subsection{Vorono\"i decomposition and group homology}

We recall some definitions and facts about 
homology of groups. \cite{MR3457984}
\begin{definition}
  The cohomological dimension $cd(\Gamma)$ of a group $\Gamma$
  is the upper bound of $n$, such that there exists a $\mathbb{Z}\Gamma$-module
  $M$ with $H^n(\Gamma;M)\neq0$.
  The virtual cohomological dimension $vcd(\Gamma)$ of $\Gamma$
  is the cohomological dimension of any torsion-free finite index 
  subgroup. 
\end{definition}
Serre \cite{MR241429} proved $vcd(\Gamma)$ well-defined.

We focus on the case $m=2$, 
$F$ is a CM field with degree $2r_2$.
First we assume
$\Gamma_L=PGL_2(O_F)$,
and $\Gamma$ any torsion-free subgroup of $\Gamma_L$ with finite index.
Then $vcd(\Gamma_L)=cd(\Gamma)=2r_2$, 
and $dim(Y_{2,F})=3r_2$.
Let $C_k$ be the set of Vorono\"i cells of dimension $k$.
The group $\Gamma$ acts naturally on $C_k$ by its action on rational
polyhedral cones.
Then $Y_{2,F}$ is a $\Gamma$-invariant deformation retract of $C$,
and $\Gamma\backslash X_{2,F}$ is a deformation retract of 
$\Gamma\backslash C$. \cite{MR427490}
Hence to compute
$\Gamma\backslash C$ 
we can turn to
$\Gamma\backslash X_{2,F}$, and of $\Gamma$. \cite{MR1681629}

Next we compare $PGL_2(O_F)\backslash X_{2,F}$
and $PSL_2(O_F)\backslash Y_{2,F}$.
Every top cell of $PGL_2(O_F)\backslash C$,
decided by the perfect form $A[x]$,
descends to a bounded 
top cell in $PGL_2(O_F)\backslash X_{2,F}$.
If $A[x]$ and $A'[x]$ are $GL_2(O_F)$-equivalent
but not $SL_2(O_F)$-equivalent,
then there are several possibilities:

(1)$A$ and $A'$ have the same determinant.
Then $A=P^*A'P$ with $P\in GL_2(O_F)$,
and $\det(P^*P)=1$. So $\det(P)$ is a torsion unit.
If $\det(P)$ is a square, then 
$A=(\frac{P}{\sqrt{\det(P)}})^*A'(\frac{P}{\sqrt{\det(P)}})$,
contradicts with assumption.
Therefore $\det(P)$ is a non-square torsion unit.
In this case, $A$ and $A'$ correspond to
two different top cells in $PSL_2(O_F)\backslash(\mathbb{H}_3)^{r_2}$
if they are $SL_2(O_F)$-equivalent
(or in other word,
$A$ is $SL_2(O_F)$-equivalent to
$\begin{pmatrix}1& \\ &\mu_F^{-1}\end{pmatrix}A\begin{pmatrix}1& \\ &\mu_F\end{pmatrix}$),
while the same cell otherwise.

(2)$A$ and $A'$ have different determinants.
Then $\det(A)/\det(A')$ must be a square unit in $O_F^*$,
which is non-torsion.
Therefore $\pi(A)$ and $\pi(A')$ are
different, but relatively have two possibilities
at the $Y_{2,F}$ component as (1).

Therefore we have:
\begin{proposition}
  Every $GL_2(O_F)$-equivalent classes of perfect forms,
  under the composition of $pi$ and the projection to
  $(SL_2(\mathbb{C})/SU_2)^{r_2}\simeq(\mathbb{H}_3)^{r_2}$,
  descends to at most 2 top cells of the orbifold
  $PSL_2(O_F)\backslash(\mathbb{H}_3)^{r_2}$.
\end{proposition}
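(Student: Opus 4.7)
Take two perfect forms $A$ and $A'$ representing the same $GL_2(O_F)$-equivalence class, writing $A' = P^* A P$ for some $P \in GL_2(O_F)$, and set $u := \det P \in O_F^*$. The goal is to show that as $P$ varies, the cell of $PSL_2(O_F)\backslash(\mathbb{H}_3)^{r_2}$ containing the image of $A'$ under $\pi$ takes at most two distinct values.

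The first step would be to decompose $P = Q \cdot D_u$ with $D_u := \mathrm{diag}(1,u)$ and $Q := P D_u^{-1} \in SL_2(O_F)$. Because the $SL_2(O_F)$-action on $(\mathbb{H}_3)^{r_2}$ descends to the $PSL_2(O_F)$-action that is quotiented out in the target orbifold, multiplying by $Q$ does not alter the containing cell. Thus the cell depends only on the class of $u$ in a certain quotient of $O_F^*$, and the task reduces to understanding how $D_u^* A D_u$ changes with $u$.

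Next I would follow the case analysis of the paragraph preceding the proposition. In Case 1 ($\det A = \det A'$), the identity $\det A = |\det P|^2 \det A'$ forces $|\det P|^2 = 1$ at every archimedean place, hence $u \in \mu_F$ by Kronecker's theorem. If $u = v^2$ is a torsion square then $Pv^{-1} \in SL_2(O_F)$, so $A \sim_{SL_2(O_F)} A'$ and the cells coincide. Otherwise $u \notin \mu_F^2$, but any two non-square torsion units differ by an element of $\mu_F^2$ (since $|\mu_F/\mu_F^2| \le 2$), so all such $A'$ lie in a common $SL_2(O_F)$-orbit and produce one additional cell. This gives at most two cells in Case 1. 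In Case 2 ($\det A \ne \det A'$, equivalently $u$ non-torsion), I would exploit the fact that scalar homotheties $vI \in GL_2(O_F)$ with $v \in O_F^*$ act on forms as $A \mapsto |v|^2 A$, a totally positive real rescaling at each place that is invisible to the projection to $(\mathbb{H}_3)^{r_2}$. Replacing $P$ by $Pv^{-1}$ preserves the cell but changes $u$ into $u v^{-2}$, and one chooses $v$ so as to bring the new determinant into $\mu_F$, reducing Case 2 to Case 1.

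The main obstacle is the final reduction in Case 2: for it to work, the class of $u$ in $O_F^*/((O_F^*)^2 \mu_F)$ must be absorbable by a central scalar, and a priori this quotient could be nontrivial for CM fields with $r_2 > 1$. Overcoming this requires the specific arithmetic of CM fields, in particular the relation between $O_F^*$ and $\mu_F \cdot O_{F^+}^*$ and the fact that the relevant Hasse index is $1$ or $2$, which keeps the obstruction bounded. A secondary delicate point is verifying that the non-square torsion case in Case 1 genuinely produces a cell distinct from the original: this amounts to checking that the isometry $z + tj \mapsto \overline{\sigma_i(u)}\,z + tj$ of $\mathbb{H}_3$ cannot be realized at every embedding simultaneously by a single $PSL_2(O_F)$-element when $u \notin \mu_F^2$, which in turn depends on $\bar u$ failing to admit a square root in $O_F^*$.
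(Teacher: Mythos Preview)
Your approach mirrors the paper's argument exactly. The discussion preceding the proposition performs the same decomposition of $P$ into an $SL_2(O_F)$ part and a diagonal part $D_u$, and the same case split: Case~1 ($\det A=\det A'$) is handled via $\mu_F/\mu_F^2$ having order at most $2$, and Case~2 ($\det A\ne\det A'$) is dispatched in a single sentence by asserting that $\det A/\det A'$ is a square unit in $O_F^*$ and that ``at the $Y_{2,F}$ component'' one is back to the two possibilities of Case~1.

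The obstacle you flag in Case~2 is precisely the point the paper passes over. The paper's implicit mechanism is the one you describe: if $\bar u\,u=w^2$ for some $w\in O_F^*$, then replacing $P$ by $w^{-1}P$ leaves the image in $(\mathbb{H}_3)^{r_2}$ unchanged (a central scalar rescales each $\sigma(A')$ by the positive real $|\sigma(w)|^{-2}$) while forcing $\det A''=\det A$, so one lands in Case~1 with the new determinant $w^{-2}u\in\mu_F$. Your Hasse-index remark supplies exactly the CM arithmetic needed to justify the paper's square-unit claim: when $[O_F^*:\mu_F O_{F^+}^*]=1$, writing $u=\zeta v$ with $\zeta\in\mu_F$ and $v\in O_{F^+}^*$ gives $\bar u\,u=v^2$ directly. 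The index-$2$ case is what neither your sketch nor the paper's one-line treatment spells out, so your worry there is well placed rather than a divergence from the intended proof.
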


Hence we see though every $GL_2(O_F)$-equivalent classes of 
perfect forms contain infinitely many $SL_2(O_F)$-classes,
we can still construct a tessellation of $(\mathbb{H}_3)^{r_2}$
by $PSL_2(O_F)$, consisting of finitely many cells.

Taking CW-topology of these cells we can construct
a Vorono\"i complex modulo $PSL_2(O_F)$-action. 
Its definition is as follows.

Let $V_k$ be the free abelian group generated by 
cells of $PSL_2(O_F)$. Let $d_k:V_k\rightarrow V_{k-1}$ be the differential map
\begin{equation}
  d_k([v_0,\dots,v_k])=\sum_{i=0}^{k}[v_0,\dots,\hat{v}_i,\dots,v_k]
\end{equation}
where $\hat{v}_i$ means $v_i$ is deleted.
If a cell is not a simplex then we first triangulate
it to the sum of some simplexes and then differentiate 
them. For example, 
assume $P$ to be an oriented triangular prism 
$[A,B,C;A',B',C']$, then after setting the 
positive orientation the same as $[ABCC']$,
we subdivide it into the sum of three
oriented tetrahedra $[ABCC']+[ABC'B']+[AC'A'B']$,
therefore
\begin{equation}
  \begin{aligned}
    d_3(P)=&[BCC']-[ACC']+[ABC']-[ABC]+[BC'B']-[AC'B']+[ABB']-[ABC']\\
    &+[C'A'B']-[AA'B']+[AC'B']-[AC'A']\\
    =&[BCC']-[ACC']-[ABC]+[BC'B']+[ABB']
  +[C'A'B']-[AA'B']-[AC'A'].\\
  \end{aligned}
\end{equation}
If two faces $P$ and $Q$ intersects at $P\cap Q$,
the choice of positive orientations must ensure
that $d_k(P)+d_k(Q)$ kills the face $P\cap Q$.
Therefore, all internal faces are killed, and $d_kd_{k-1}=0$. 
(Different triangulations possibly bring differences, 
but later we will see this does not effect our final results.)
We denote the complex
$(V_*,d_*)$ as $\mathrm{Vor}_{2,F}$.

\section{Computation of Perfect Forms and Cell Complexes}

Let $m\geq1$ an integer.
We will focus on only
totally real and CM number fields.
We give detailed description of $m$-ary perfect quadratic forms 
in these cases,
from which we can obtain the Vorono\"i complex $\mathrm{Vor}_{2,F}$
with computational method developed by Sch\"urmann \cite{MR2537111}\cite{MR2300003}.

\subsection{Rational quadratic forms}

In this section, let $F=\mathbb{Q}$.
Let $A$ be an $m\times m$ positive definite matrix
over $\mathbb{Q}$. Then $A[x]$ is
the quadratic form with matrix $A$.
For every $\lambda>0$, the set 
\begin{equation}
  E(A,\lambda):=\{x\in\mathbb{R}^d|A[x]\leq\lambda\}
\end{equation}
is a closed, strictly convex subset of $\mathbb{R}^m$
with center 0. By Minkowski's convex body theorem, 
if $E(A,\lambda)$ has volume at least $2^d$, then 
$E(A,\lambda)$ contains a point in $\mathbb{Z}^d\backslash\{0\}$.
So there is a smallest $\lambda(Q)$ such that 
$E(A,\lambda)$ contains a non-zero integral point.
It is actually the minimum of $A$.
Moreover, Hermite \cite{MR1578698} proved 
\begin{equation}
  \mathrm{min}(A)\leq(\det(A))^{1/m}\left(\frac{4}{3}\right)^{(m-1)/2}.
\end{equation}
We call
\begin{equation}
  \mathcal{H}_d:=\sup_{A\in Sym_d^{>0}(\mathbb{Q})}\frac{\mathrm{min}(Q)}{\det(Q)^{1/m}}
\end{equation}
the Hermite's constant, which depends on only $m$.
Moreover, define the Ryshkov polyhedron \cite{MR276873}
\begin{equation}
  \mathcal{R}:=\{A\in Sym_m^{>0}(\mathbb{Q})|\mathrm{min}(A)\geq1\}.
\end{equation} 
Then we have 
\begin{equation}
  \mathcal{H}_m=1/\inf_\mathcal{R}(\det(A))^{1/m}.
\end{equation}
Then we have the fundamental identity
$A[x]=\langle A,xx^*\rangle$. So we see the 
Ryshkov polyhedra are intersections of infinitely many halfspaces:
\begin{equation}
  \mathcal{R}=\{A\in Sym_m^{>0}(\mathbb{Q})|\langle A,xx^*\rangle\geq1,\forall x\in\mathbb{Z}^m\}.
\end{equation}
It is a locally finite polyhedron, i.e. its 
intersection with any polytope os a polytope.

Vorono\"i \cite{MR1580754} first proved that 
there exist only finitely many $m$-ary rational perfect forms,
given $m$ and the minimum, up to $GL_m(\mathbb{Z})$-equivalence
and scaling.

The vertices of $\mathcal{R}$ are exactly perfect forms.
The Vorono\"i graph in dimension $m$ is as follows:
the vertices are $m$-ary rational perfect forms,
and $AA'$ is connected if the convex of
$AA'$ is an edge of $\mathcal{R}$.
In this case, we say $A$ and $A'$ are contiguous perfect forms,
or Vorono\"i neighbors.
The following Vorono\"i's algorithm,
which is a graph traversal alghorithm,
can be used to enumerate perfect forms up to
equivalence and scaling:

\begin{algorithm}
\caption{Voromo\"i's algorithm}
\begin{algorithmic}[1]
\Require Dimension $m$ and an $m$-ary perfect form $A$
\Ensure A complete list of inequivalent $m$-ary perfect forms
\State $i\gets 0, A_0\gets A$
\State $M\gets\mathrm{Min}(A_i)$
\State $\mathcal{P}(A_i)\gets\{A'\in Her_m^{>0}(\mathbb{Q})|A'[x]\geq0,\forall x\in\mathrm{Min}(A_i)\}$
\State Transfer the polyhedron $\mathcal{P}(A_i)$ to
V-description, and enumerate extreme rays $R_1,\dots,R_k$ of the cone 
$\mathcal{P}(A_i)$
\While{$R\in\{R_1,\dots,R_k\}$}
\State Determine contiguous forms $A_i+\alpha R$
\If{$A_i+\alpha R$ is not equivalent to a known form}
    \State Add $A_i+\alpha R$ after the sequence $\{A_n\}$
\EndIf
\EndWhile
\State $i\gets i+1$, go back to step 2, until no new perfect forms added
\end{algorithmic}
\end{algorithm}

As for the initial form, we have for example
\begin{equation}
  A[x]=\sum_{i=1}^m x_i^2-\sum_{i=1}^{m-1}x_ix_{i+1}
\end{equation}
which is called Vorono\"i's first perfect form,
associated to the root lattice \cite{MR1957723}.

As for the computation of the minimum and minimum points of a 
certain positive definite form, we can use the Algorithm of 
Fincke and Pohst \cite{MR1228206}:
Given a positive definite form $A$, we only need to compute all 
$x\in\mathbb{Z}^m$ with $A[x]\leq C$ for some constant $C>0$.
Let $C=\min_{1\leq i\leq m}q_{ii}$, then $A[x]\leq C$
has at least one integer solution, hence $\min(A)\leq C$.
Note that it is easy to write 
\begin{equation}
  A[x]=\sum_{i=1}^m A_i\left(x_i-\sum_{j=i+1}^m\alpha_{ij}x_j\right)^2
\end{equation}
with $A_i,\alpha_{ij}\in\mathbb{R}$. Therefore the minimum points must 
satisfy 
\begin{equation}
  \left|x_i-\sum_{j=i+1}^m\alpha_{ij}x_j\right|\leq\sqrt{\frac{C}{A_i}}
\end{equation}
for $i=m,m-1,\dots,1$. Obviously the number of 
integer solutions are finite.

We recall the knowledge about descriptions of 
polyhedra. As a subset of a finite-dimensional 
Euclidean space $\mathbb{E}$, a (closed) polyhedron $P$ can be 
described by finitely many linear inequalities:
\begin{equation}
  P=\{x\in\mathbb{E}|\mathcal{A}x\geq b\}
\end{equation}
where $\mathcal{A}$ is a matrix with $dim(\mathbb{E})$ columns,
and has same rows as $b$, which is a $\mathbb{R}$-column vector.
It is called H-description of $P$. Besides, 
by Farkas-Minkowski-Weyl Theorem \cite{MR874114}
$P$ can also be described by some finite set of generators:
\begin{equation}
  \begin{split}
    P&=\mathrm{conv}\{v_1,\dots,v_k\}+\mathrm{cone}\{v_{k+1},\dots,v_n\}\\
   &=\{\sum_{i=1}^{n}\lambda_iv_i|\lambda_i\geq0,\sum_{i=1}^{k}\lambda_i=1\}
  \end{split}
\end{equation}
where $v_i\in\mathbb{E}$ for $1\leq i\leq n$.
This is called a V-description of $P$.
If $n$ is minimum, then $v_1,\dots,v_k$ are vertices 
and $v_{k+1},\dots,v_n$ are rays of $P$.
The algorithm for transformation between 
H-description and V-description is known \cite{MR874114}.

In line 6 of Algorithm 1, the contiguous form of $A$
are of the form $A+\rho R$, where $\rho$ is the smallest positive
number such that 
\begin{equation}
  \min(A+\rho R)=\min(A),\mathrm{Min}(A+\rho R)\not\subset\mathrm{Min}(A).
\end{equation}
The following algorithm determines the value of $\rho$:

\begin{algorithm}
\caption{Determination of Vorono\"i neighbors}
\begin{algorithmic}[1]
\Require A perfect form $A$ and an extreme ray $R$
\Ensure $\rho>0$, such that (3.12) holds

\State{$l\gets0,u\gets1$}

\While{$A+uR\notin Sym_m^{>0}(\mathbb{R})$ or $\min(A+uR)=\min(A)$}

\If{$A+uR\notin Sym_m^{>0}(\mathbb{R})$}
\State{$u\gets(l+u)/2$}
\Else
\State{$l\gets u,u\gets 2u$}
\EndIf
\EndWhile

\While{$\mathrm{Min}(A+lR)\subseteq\mathrm{Min}(A)$}

{$\gamma\gets(l+u)/2$}
\If{$\mathrm{Min}(A+uR)\not\subset\mathrm{Min}(A)$ and $\min(A+uR)=\min(A)$}
\State{$l\gets u$}
\ElsIf{$\min(A+\gamma R)\geq\min(A)$}
\State{$l\gets\gamma$}
\Else 
\State{
  $u\gets\min\{(\min(A)-A[v])/R[v]|v\in\mathrm{Min}(A+\gamma R),R[v]<0\}\cup\{\gamma\}$
}
\EndIf

\EndWhile
\State{$\rho\gets l$}

\end{algorithmic}
\end{algorithm}

\begin{remark}
  The algorithm 2 is slightly different from algorithm 2
  of Sch\"urmann \cite{MR2537111}. This is because the author
  find it possible to be trapped into infinite loop in 
  the original algorithm if the exact value of 
  $\rho$ satisfying (3.12) is already stored in $u$.
\end{remark}

In the first while loop, the procedure determines a lower bound 
$l$ and upper bound $u$ of $\rho$, such that 
$A+lR$ and $A+uR$ are both positive definite, with
$\min(A+lR)=\min(A)$, $\min(A+uR)<\min(A)$.
In the second while loop, the value of $\rho$ is finally 
determined.

As for line 7 of Algorithm 1, the equivalence test,
note that
if $A$ and $A'$ are both $m$-ary perfect forms,
then $A$ and $A'$ are $GL_m(\mathbb{Z})$-equivalent if and only if 
they have the same minimum and their minimum points sets 
are equivalent. All the process above can be completed in 
software. For example, SageMath collects
many kinds of algorithms, including above ones.

\begin{remark}
  The number of equivalent classes of rational perfect forms,
  as well as time complexity of computation,
  grows rapidly when $m$ increases.
  The number of equivalent classes is less than 8
  when $m\leq6$, 33 if $m=7$, and 10916 if $m=8$.
  Besides, when $m\leq6$, the calculation
  can be done by hand, and the case $m=7$
  takes little time by computer. However
  when it comes to $m=8$ it takes months of period. \cite{MR2300003}
  The author finds that most of calculation time
  is consumed in line 4 of Algorithm 1, the 
  transformation from $H$-description to 
  $V$-description of a polyhedral cone.
\end{remark}

\subsection{$T$-perfect forms}

Let $T\subseteq Sym_m(\mathbb{R})$ be a linear subspace.
The intersection 
\begin{equation}
  \mathcal{R}_T:=\mathcal{R}\cap T
\end{equation}
is also a locally finite polyhedron.
Its vertices are called $T$-perfect forms.
Generally $T$-perfect forms are not 
necessarily globally perfect,
and finiteness of $T$-perfect forms up to 
$GL_m(\mathbb{Z})$-equivalence may be lost \cite{MR1308129}.
However, we can still generalize Vorono\"i's algorithm
to a graph traversal search of $T$-equivalent
$T$-perfect forms. Here two $T$-perfect forms
are said to be $T$-contiguous if they are connected by 
an edge of $\mathcal{R}_T$.

\begin{remark}
  One might interested in $G$-perfect forms.
  Let $G$ be a finite subgroup of $GL_m(\mathbb{Z})$.
  The space of $G$-invariant quadratic forms
  \begin{equation}
    T_G:=\{A\in Sym_m(\mathbb{R})|U^*AU=A,\forall U\in G\}
  \end{equation}
is a linear subspace of $Sym_m(\mathbb{R})$.
The $T_G$-perfect forms are also called $G$-perfect forms.
The $G$-equivalence classes of $G$-perfect forms are finite 
\cite{MR1413575}.
\end{remark}

The following algorithm, originated from 
Vorono\"i's Algorithm 1, can be used to find 
equivalent classes of $T$-perfect forms,
although possibly
never ends or "dead end" occurs (All extreme rays of $\mathcal{P}_T(A)$
are positive semidefinite, and therefore 
the ray $\{A+\alpha R|\alpha>0\}$ has no $T$-contiguous
$T$-perfect form of $A$).

\begin{algorithm}
\caption{Voromo\"i's algorithm}
\begin{algorithmic}[1]
\Require Dimension $m$ and a linear subspace $T$ of $Sym_m(\mathbb{R})$,
and a $T$-perfect form $A$
\Ensure A complete list of $T$-inequivalent $m$-ary $T$-perfect forms
\State $i\gets 0, A_0\gets A$
\State $M\gets\mathrm{Min}(A_i)$
\State $\mathcal{P}_T(A_i)\gets\{A'\in Her_m^{>0}(\mathbb{Q})|A'[x]\geq0,\forall x\in\mathrm{Min}(A_i)\}$
\State Transfer the polyhedron $\mathcal{P}_T(A_i)$ to
V-description, and enumerate extreme rays $R_1,\dots,R_k$ of the cone 
$\mathcal{P}_T(A_i)$
\While{$R\in\{R_1,\dots,R_k\}$}
\State Determine $T$-contiguous forms $A_i+\alpha R$
\If{$A_i+\alpha R$ is not $T$-equivalent to a known form}
    \State Add $A_i+\alpha R$ after the sequence $\{A_n\}$
\EndIf
\EndWhile
\State $i\gets i+1$, go back to step 2, until no new perfect forms added
\end{algorithmic}
\end{algorithm}

Another difference is, compared to perfect cases, we do not have
an initial $T$-perfect form to start in general.
With some adjustment of Algorithm 2, we have the following algorithm
4 to find an initial $T$-perfect form.

\begin{algorithm}
\caption{Determination of an initial $T$-perfect form}
\begin{algorithmic}[1]
\Require Dimension $m$, a linear subspace $T$ of $Sym_m(\mathbb{R})$,
and a positive definite form $Q$ in $T$
\Ensure A $T$-perfect form $A$ that has the same minimum as $Q$
\State{$A\gets Q_0$}
\State{$L\gets$ the maximal linear subspace of $\mathcal{P}_T(A)$}
\While{$L$ is not trivial}

\State{Choose a form $R\in L\backslash\{0\}$, which is indefinite}

\State{$l\gets0,u\gets1$}

\While{$A+uR\notin Sym_m^{>0}(\mathbb{R})$ or $\min(A+uR)=\min(A)$}

\If{$A+uR\notin Sym_m^{>0}(\mathbb{R})$}
\State{$u\gets(l+u)/2$}
\Else
\State{$l\gets u,u\gets 2u$}
\EndIf
\EndWhile

\While{$\mathrm{Min}(A+lR)\subseteq\mathrm{Min}(A)$}

{$\gamma\gets(l+u)/2$}
\If{$\mathrm{Min}(A+uR)\not\subset\mathrm{Min}(A)$ and $\min(A+uR)=\min(A)$}
\State{$l\gets u$}
\ElsIf{$\min(A+\gamma R)\geq\min(A)$}
\State{$l\gets\gamma$}
\Else 
\State{
  $u\gets\min\{(\min(A)-A[v])/R[v]|v\in\mathrm{Min}(A+\gamma R),R[v]<0\}\cup\{\gamma\}$
}
\EndIf

\EndWhile
\State{$\rho\gets l$}
\State{$A\gets A+\rho R$}
\State{$L\gets$ the maximal linear subspace of $\mathcal{P}_T(A)$}

\EndWhile

\end{algorithmic}
\end{algorithm}

\subsection{Perfect forms over totally real and CM fields}

Let $m>0$ be an integer and $F$ be a totally real field or 
a CM field. By Theorem 2.2(2), we can investigate perfect 
Hermitian forms over $F$ by studying $T$-perfect forms 
as follows.

Let $F$ be totally real with $r_1$ real places, and
has integral basis $a_1,a_2,\dots,a_{r_1}$.
Let $A$ be a totally positive definite $m\times m$ matrix of $F$,
i.e. $\sigma(A)\in Sym_m^{>0}(\mathbb{R})$ for every
embedding $\sigma:F\hookrightarrow\mathbb{R}$.
$A[x]=Tr_{F/\mathbb{Q}}(A,q(x))$ always takes values in 
$\mathbb{Q}$. If we write $x=(x_1,\dots,x_m)^T$
where 
\begin{equation}
  x_i=\sum_{j=1}^{r_1}x_{i,j}a_j, x_{i,j}\in\mathbb{Z},
\end{equation}
then $A[x]$ can also be viewed as
an $mr_1$-ary rational quadratic form, located in a subspace
$T$ of $Sym_{mr_1}(\mathbb{R})$ with dimension
$\frac{m(m+1)}{2}r_1$. By Theorem 2.2,
The calculation of $T$-perfect forms ends within finitely
many steps.

The CM field cases are similar.
Let $F$ be a CM field with $r_2$ complex places, and
has integral basis $a_1,a_2,\dots,a_{2r_2}$.
Let $A$ be a totally positive definite $m\times m$
Hermitian matrix of $F$,
i.e. $\sigma(A)\in Her_m^{>0}(\mathbb{R})$ for every
embedding $\sigma:F\hookrightarrow\mathbb{C}$.
$A[x]=Tr_{F/\mathbb{Q}}(A,q(x))$ always takes values in 
$\mathbb{Q}$. If we write $x=(x_1,\dots,x_m)^T$
where 
\begin{equation}
  x_i=\sum_{j=1}^{2r_2}x_{i,j}a_j, x_{i,j}\in\mathbb{Z},
\end{equation}
then $A[x]$ can be viewed as
an $2mr_2$-ary rational quadratic form, located in a subspace
$T$ of $Sym_{2mr_2}(\mathbb{R})$ with dimension
$m^2r_2$. Therefore the calculation of $T$-perfect forms 
also ends within finitely many steps.
(For the simplest case that $F$ is imaginary quadratic,
see detailed instruction in Sikiric et al. \cite{MR3457984})

\subsection{The polyhedral cones and cell complex}

Given integer $m>0$ and a totally real or CM field 
$F$, we now have a complete list of 
$GL_m(\mathbb{O_F})$-equivalent classes 
of $m$-ary perfect forms with a certain minimum
\begin{equation}
  \{A_1,A_2,\dots,A_n\}
\end{equation}
alongwith their minimum point sets
\begin{equation}
  \{\mathrm{Min}(A_1),\mathrm{Min}(A_2),\dots,\mathrm{Min}(A_n)\}.
\end{equation}
Define $C_i$, the perfect cone attached to $A_i$,
as follows:
\begin{equation}
  PC_i:=\{\sum_{v\in\mathrm{Min}(A_i)}\lambda_vq(v)|\lambda_v>0\}
\end{equation}
where $q(v)$ is regarded the same as
its image under infinite places:
$(\sigma(q(v)))_{\sigma:F\hookrightarrow\mathbb{C}}$.
Then
for every arithmetic subgroup $\Gamma$ of $GL_2({O}_F)$,
$\{PC_i\}_{i=1}^n$ form a $\Gamma$-admissible decomposition of 
$(Her_m^{>0}(\mathbb{C}))^{r_2}$ (resp. $(Sym_m^{>0}(\mathbb{R}))^{r_1}$)
if $F$ is CM (resp. totally real).

From Section 3 we see,
when $m=2$ and $F$ is a CM field, then
the cones descend to a $PSL_2(O_F)$-tessellation of 
$(\mathbb{H}_3)^{r_2}$.
We write $P_i$ the image of $PC_i$,
and $P_i^*$ the closure of $P_i$, added vertices of $P_i$.
Then we have 
\begin{equation}
  \Sigma_{3r_2}^*=\{P_1^*,P_2^*,\dots,P_n^*\},
\end{equation}
a $PSL_2(O_F)$-tessellation of $(\mathbb{H}_3)^{r_2}$ by ideal polytopes.

Now that we have $\Sigma_{3r_2}^*$, we define $\Sigma_{i}^*$
for $i=3r_2-1,...,1,0$ as follows:
The set $\Sigma_{i}^*$ is the collection of $i$-dimensional facets
of ideal polytopes in $\Sigma_{i+1}^*$,
modulo the action of $PSL_2(O_F)$.
Therefore $\Sigma_{0}^*$ is the set of vertices of 
ideal polytopes in $\Sigma_{3r_2}^*$,
while $\Sigma_{1}^*$ is the set of edges.
Every $P\in\Sigma_i^*$ is a descending of a cone 
with rays $\{q(v)\}$ while $v$ runs over 
minimum points of a binary positive definite form over $F$.

The data of $\Sigma_{i}$
is important for describing
torsion-free part of Bloch groups of $F$.
Define 
\begin{equation}
  Y:Y^0\subset Y^1\subset\dots\subset Y^{3r_2}
\end{equation}
where $Y^i:=\Sigma_{0}^*\cup\dots\cup\Sigma_{i}^*$.
Then $Y$ is a $3r_2$-dimensional finite 
CW-complex, and is the CW-complex structure of 
$PSL_2(O_F)$.
Recall the complex $\mathrm{Vor}_{2,F}$
defines in 3.3.3. One can easily
obtain that $Y$ is  
$\mathrm{Vor}_{2,F}$ modulo $PSL_2(O_F)$-action.
Besides, Since $\overline{Q_{2,F}}$ is homotopy equivalent to 
$Q_{2,F}$, we have 
\begin{equation}
  H_*(X)\simeq H_*(Q_{2,F})\simeq H_*(\overline{Q_{2,F}}).
\end{equation}
Particularly, we have 
basis of $H_3(X)$, represented by sums of some 3-cells.

\section{Torsion-free part of Bloch groups}

In previous chapters, we give detailed process
to find a $PSL_2(O_F)$-tessellation of $(\mathbb{H}_3)^{r_2}$ by ideal polytopes.
This chapter will give an algorithm
to find a full-rank subgroup of $\overline{B}(F)_{tf}$,
and determine its index.

\subsection{Cross-ratios, Bloch groups and $K$-groups}

We first recall some connections between
Bloch groups and $K$-theory.

For $i>1$ an integer, and a number field $F$,
Quillen's $K$-group $K_i(O_F)$ is 
the $i$-th homotopy group of an H-space $BGL^+(O_F)$ having the same homology as 
$\displaystyle\lim_{\longrightarrow}GL_n(O_F)$. 
By Borel's theorem \cite{MR387496}, if $i>1$, the rank of $K_i(O_F)$
is $0$ if $i$ is even, $r_2$ if $i\equiv3\pmod4$,
and $r_1+r_2$ if $i\equiv1\pmod4$.

Now we introduce definitions and conclusions
about the modified Bloch group. \cite{MR4264211}

(1)Let $F$ be a CM field, and fix two subgroups $\nu\subseteq\nu'$ of $F^*$.
Let $\Delta=GL_2(F)/\nu$. Let $\mathcal{L}$ be the set of orbits for the action of $\nu'$ on $F^2-\{(0,0)\}$ given by homotheties.
Thus $\mathcal{L}$ has a natural map $\tau$ to $\mathbb{P}^1(F)$. (For example, $\mathcal{L}=\mathbb{P}^1(F)$ if $\nu'=F^*$, and 
$F^2\backslash\{0\}$ if $\nu'$ is trivial.)

For $p_0,p_1,p_2\in F^2$ with distinct images in $\mathbb{P}^1(F)$, define $cr_2(p_0,p_1,p_2)$ in $\tilde{\wedge}^2(F^*)$ by:
\begin{equation}
  cr_2(p_0,p_1,p_2)=a\tilde{\wedge}b,\ where\ (p_0,p_1)^{-1}p_2=(a,b)^T
\end{equation}
which has the following properties:
\begin{enumerate}[(a)]
  \item For every $g\in GL_2(F)$, $cr_2(p_0,p_1,p_2)=cr_2(gp_0,gp_1,gp_2)$.
  \item $cr_2((1,0)^T,(0,1)^T,(a,b)^T)=a\tilde{\wedge}b.$
  \item $cr_2$ is alternating.
  \item Scaling one of the $p_i$ by $\lambda\in\nu'$ will change $cr_2$ by a term $\lambda\tilde{\wedge}c$ with $c\in F^*$.
\end{enumerate}

Define a homomorphism
\begin{equation}
  f_{2,F}: C_2(\mathcal{L})\rightarrow\tilde{\wedge}^2(F^*)/\nu'\tilde{\wedge}F^*
\end{equation}
by sending non-degenerate $(l_0,l_1,l_2)$ to the image of its $cr_2$ (well-defined by (iv) above), and degenerate tuples to trivial element.

Define another homomorphism 
\begin{equation}
  f_{3,F}: C_3(\mathcal{L})\rightarrow\mathbb{Z}[F^\flat]
\end{equation}
by sending degenerate tuples to zero and non-degenerate $(l_0,l_1,l_2,l_3)$ to $[cr_3(l_0,l_1,l_2,l_3)]$,
which is well-defined, and has following properties:

\begin{enumerate}[(a)]
  \item $cr_3(gl_0,gl_1,gl_2,gl_3)=cr_3(l_0,l_1,l_2,l_3)$ for every $g$ in $GL_2(F)$.
  \item $cr_3((1,0)^T,(0,1)^T,(1,1)^T,(x,1)^T)=x$ for $x\in F^\flat$.
  \item Even permutations on $l_i$s change the $cr_3=x$ into $x,1-x^{-1},(1-x)^{-1}$, while odd ones into $1-x,x^{-1},(1-x^{-1})^{-1}$.
\end{enumerate}

The map $f_{3,F}$ onto $\bar{\mathfrak{p}}(F)$ is compatible with action of $S_4$ by (iii).
And $f_{2,F}$ is also compatible with action of $S_3$ on oriented triangles if we lift them to elements of $C_2(\mathcal{L})$ for some suitable $\mathcal{L}$.

Then Burns et al \cite{BURNS20121502} proved that
\begin{theorem}
  \begin{enumerate}[(a)]
    \item There is a homomorphism 
    \begin{equation}
      \phi_F: Ker(\delta_{2,F})\rightarrow K_3(F)_{tf}^{ind}
    \end{equation}
    that is natural up to sign, and functorial in $F$ after fixing a choice of sign.
    \item $Coker(\phi_F)$ is finite if $F$ is a number field.
    \item Let $reg_2$ be the Beilinson's regulator map $K_3(\mathbb{C})\rightarrow\mathbb{R}$.
    (By \cite{MR1869655} is exactly twice of Borel's regulator map.) 
    There exists a universal choice of sign such that if $F$ is any number field and $\sigma:F\rightarrow\mathbb{C}$ is any embedding,
    then the composition
    \begin{equation}
      reg_\sigma: Ker(\delta_{2,F})\stackrel{\phi_F}{\longrightarrow}K_3(F)_{tf}^{ind}=K_3(F)_{tf}\stackrel{\sigma_*}{\longrightarrow}K_3(\mathbb{C})_{tf}\stackrel{reg_2}{\longrightarrow}\mathbb{R}
    \end{equation}
    is induced by mapping $[x]$ in $F^\flat$ to $iD(\sigma(x))$.
  \end{enumerate}
\end{theorem}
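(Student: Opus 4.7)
The plan is to build $\phi_F$ at the chain level from the configuration maps $f_{2,F}$ and $f_{3,F}$ already introduced, then identify the resulting homology class with an element of $K_3^{ind}(F)_{tf}$ via Suslin's stability theorem, and finally compute the Beilinson regulator by geometric integration on ideal tetrahedra. Concretely, given $\xi=\sum n_i[x_i]\in\ker(\delta_{2,F})$, I would lift each symbol to the configuration $((1,0)^T,(0,1)^T,(1,1)^T,(x_i,1)^T)\in C_3(\mathcal{L})$, obtaining $\tilde\xi\in C_3(\mathcal{L})$. The five-term relation together with the cross-ratio identities for $cr_3$ force the lift to descend to $\mathfrak{p}(F)$, and compatibility of $f_{2,F}\circ d$ with (a refinement of) $\delta_{2,F}$ shows that $d\tilde\xi$ vanishes modulo the relations $\nu'\tilde{\wedge}F^*$, so $\tilde\xi$ defines a cycle in an appropriate quotient complex. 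Passing to $PGL_2(F)$-coinvariants produces a class in $H_3(BGL_2(F))$, and Suslin's identification of this (modulo the image from $GL_1$) with $K_3^{ind}(F)$ yields $\phi_F(\xi)$; naturality is automatic because only field operations and cross-ratios appear, while the sign ambiguity reflects the orientation choice on the tetrahedral configuration.

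For part (b), I would combine Borel's rank theorem, which gives $\mathrm{rank}(K_3(O_F)_{tf})=\mathrm{rank}(K_3(F)_{tf}^{ind})=r_2$, with the fact already recalled in Section~1 that $\overline{B}(F)$ is finitely generated of rank $r_2$. Since both source and target are finitely generated abelian groups of equal rank, finiteness of $\mathrm{Coker}(\phi_F)$ reduces to surjectivity of $\phi_F\otimes\mathbb{Q}$, and this follows once $\phi_F\otimes\mathbb{Q}$ is known to be injective. Injectivity is in turn a consequence of part (c): the $r_2$ regulator maps $reg_\sigma$ separate a full-rank sublattice of $\overline{B}(F)$ by Borel's theorem, so no nonzero element of $\overline{B}(F)_{tf}\otimes\mathbb{Q}$ can map to $0$ in $K_3(F)_{tf}^{ind}\otimes\mathbb{Q}$.

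For part (c), the computation reduces to a single symbol $[x]$. Under $\phi_F$ followed by $\sigma_*$ the class $\phi_F([x])$ is represented by the ideal tetrahedron in $\mathbb{H}_3$ with cuspidal vertices $(1,0)^T,(0,1)^T,(1,1)^T,(\sigma(x),1)^T$, whose cross-ratio is $\sigma(x)$. Borel's regulator on $H_3(SL_2(\mathbb{C}))\to K_3(\mathbb{C})$ is represented by integration of the invariant volume form on $\mathbb{H}_3$ against the simplicial chain of such tetrahedra, and the remark following Definition~1.1 identifies this integral with the Bloch--Wigner dilogarithm $D(\sigma(x))$. Beilinson's normalization doubles Borel's, while the factor of $i$ arises from identifying the real regulator target with the imaginary part of the complex dilogarithm; tracking these normalizations yields $iD(\sigma(x))$, and this compatibility simultaneously pins down the universal choice of sign in part (a).

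The main obstacle is the chain-level well-definedness in part (a): one must verify that two lifts of a given $[x]$ to $C_3(\mathcal{L})$ differ by a boundary modulo the scaling relations captured by property (iv) of $cr_2$, and that the resulting class in $H_3$ really lies in the indecomposable quotient rather than in the image of $K_3^M(F)$. This is where Suslin's stability $H_3(SL_2)\to H_3(SL)$ and the careful bookkeeping of the quotient by $\nu'\tilde{\wedge}F^*$ in the definition of $\delta_{2,F}$ both become essential.
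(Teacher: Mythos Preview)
The paper does not prove this theorem at all: it is introduced with the phrase ``Then Burns et al \cite{BURNS20121502} proved that'' and stated without proof as a citation of an external result. There is therefore no argument in the paper to compare your proposal against.

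That said, your sketch for part (b) has a genuine circularity. You argue that $\phi_F\otimes\mathbb{Q}$ is injective because the regulators $reg_\sigma$ separate a full-rank sublattice of $\overline{B}(F)$. But in the statement as written, $reg_\sigma$ is \emph{defined} as the composite through $\phi_F$, so of course $\phi_F(\xi)=0$ implies $reg_\sigma(\xi)=0$; this tells you nothing about injectivity of $\phi_F$. What you actually need is an independent reason why the dilogarithm maps $[x]\mapsto D(\sigma(x))$ separate $\overline{B}(F)_{tf}\otimes\mathbb{Q}$, and the standard route to that is Suslin's theorem identifying $B(F)$ with $K_3^{ind}(F)$ up to torsion --- which is essentially the content of (a) and (b) together, not a consequence of (c). The finiteness of the cokernel in Burns et al.\ is obtained from Suslin's structural result, not from a regulator computation.

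Your outlines for (a) and (c) are closer in spirit to the actual argument, though in (a) the passage from a cycle in $C_3(\mathcal{L})$ modulo relations to a class in $H_3(GL_2(F))$ and then to $K_3^{ind}$ requires more than ``passing to coinvariants'': one needs the full machinery relating the configuration complex to group homology and then Suslin's comparison with the Bloch group, which is substantial and not something that follows formally from the cross-ratio identities alone.
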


The set $\mathcal{L}$ has a natural map to 
the set of $F$-rational rays of $Her_2^{>0}(F)$:
\begin{equation}
  x=(x_0,x_1)\rightarrow \mathbb{R}_{>0}q(xx^*)
\end{equation}
Let $C_n(\mathcal{L})$ be the free abelian group generated by 
oriented $n$-dimensional simplexes with vertices in
$\mathcal{L}$ in $\mathbb{H}_3$.
And let $d_n:C_n(\mathcal{L})\rightarrow C_{n-1}(\mathcal{L})$ be the differential map
$$d_n(a_0,a_1,\dots,a_n)=\displaystyle\sum_{i=0}^n(-1)^i(a_0,\dots,\hat{a}_i,\dots,a_n)$$
where $\hat{a}_i$ means $a_i$ is deleted.
Then $(C_*(\mathcal{L}),d_*)$ is a complex, 
and we can investigate its homology.

\begin{remark}
  If $\nu'=F^*$, then $\mathcal{L}=\mathbb{P}^1(F)$, and the complex
  $(C_*(\mathcal{L}),d_*)$ becomes $\mathrm{Vor}_{2,F}.$
  Modulo $PSL_2(F)$-action, it becomes $Y_{2,F}$.
\end{remark}

We consider the following commutative diagram from (3.16) of \cite{MR4264211}:

\begin{equation}
  \begin{tikzcd}
    \dots \arrow[r, "d_5"] & C_4(\mathcal{L}) \arrow[r, "d_4"] \arrow[d] & C_3(\mathcal{L}) \arrow[r, "d_3"] \arrow[d, "f_{3,F}"] &C_2(\mathcal{L}) \arrow[r, "d_2"] \arrow[d,"f_{2,F}"] 
    & C_1(\mathcal{L}) \arrow[r, "d_1"] \arrow[d] & C_0(\mathcal{L}) \\
    \  & 0 \arrow[r] & \bar{\mathfrak{p}}(F) \arrow[r,"\partial_{2,F}^{\nu}"] & \tilde{\wedge}^2F^*/\nu\tilde{\wedge}F^* \arrow[r] & 0 & \ 
\end{tikzcd}
\end{equation}
And therefore, we have an induced surjective map
$$Ker(d_3)/Im(d_4)\rightarrow \bar{B}(F)_{v'}.$$
by sending an oriented tetrahedron to its $[cr_3]$.
The only 
homotheties in $SL_2(O_F)$ 
that keep a fixed point in $\mathbb{P}_F^1$ stable are $\pm E_2$.
So we can choose $v'=\{\pm1\}$, and we have 
\begin{equation}
  2\bar{B}(F)_{v'}\subseteq \bar{B}(F)\subseteq \bar{B}(F)_{v'}.
\end{equation}

\subsection{Non-trivial elements in  $\bar{B}(F)_{tf}$}

After calculation in Chapter 4,
we now have generators of torsion-free part of 
\begin{equation}
  H_3(Vor_{2,F})\simeq H_3(Q_{2,F})\simeq H_3(\overline{Q_{2,F}})\simeq H_3(PSL_2(O_F))
\end{equation}
with rank $r_2$, corresponding to $r_2$ non-conjugate embeddings,
and can be mapped to 
\begin{equation}
H_3(PSL_3(O_F))_{tf}\simeq K_3(O_F)_{tf}=K_3(F)_{tf}\simeq \bar{B}(F)_{tf},
\end{equation}
which also has rank $r_2$, and the induced map has finite cokernel.

Let $N$ be the least common multiple of orders of finite subgroups of 
$PSL_2(O_F)$.
Let $T_1,T_2,\dots,T_{r_2}$ be a set of basis of $H_3(Q_{2,F})_{tf}$,
written in the form of sums of some oriented ideal 3-simplices,
each of which divided by the order of stabilizer group 
in $PSL_2(O_F)$.
For example, if the stabilizer subgroup of $ABCD$ is generated
by $\begin{pmatrix}    1&0\\0&-1\end{pmatrix}$, we write $\frac{1}{2}[ABCD]$.
If non-simplicial 3-complexes appears, triangulate them and 
summarize the 3-simplices. For example,
the pyramid $A-BCDE$ can be subdivided to 
$ABCD+ABDE$ or $AEBC+AECD$. If for further
the stabilizer subgroup of $A-BCDE$ is of order 4, 
we write $\frac{1}{4}([ABCD]+[ABDE])$
or $\frac{1}{4}([AEBC]+[AECD])$. 
If flat 3-polytopes ocurrs we do the same things.

About the value of $N$, we have the following theorem
giving an upper bound:
\begin{theorem}\cite{MR2681719}
  \begin{enumerate}[(a)]
    \item The only possibilities of finite subgroups of $PGL_2(\mathbb{C})$
    are $\mathbb{Z}/r$, $D_r$, $A_4$, $S_4$, $A_5$.
    \item $PGL_2(F)$ contains $\mathbb{Z}_r$, if and only if it contains 
    $\mathbb{Z}_r$, if and only if $\eta_r+\eta_r^{-1}$.
    \item $PGL_2(F)$ contains $A_4$, if and only if it contains 
    $S_4$, if and only if $-1$ is a sum of two squares in $F$.
    \item $PGL_2(F)$ contains $A_5$, if and only if $-1$ is a sum of two 
    squares in $F$ and $\sqrt{5}\in F$.
  \end{enumerate}
\end{theorem}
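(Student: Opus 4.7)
The plan is to treat the four parts in order, exploiting that $PGL_2(\mathbb{C})$ is canonically identified with $\mathrm{Isom}^+(\mathbb{H}_3)$ and acts on $\mathbb{P}^1(\mathbb{C})$ by M\"obius transformations. For part (a), any finite subgroup $G\subset PGL_2(\mathbb{C})$ acts on $\mathbb{H}_3$ by isometries, and the hyperbolic center of mass of any single $G$-orbit produces a $G$-fixed point; hence $G$ is conjugate into the stabilizer of $j\in\mathbb{H}_3$, which is $PSU_2\cong SO(3)$. One then invokes the classical orbit/pole classification of finite subgroups of $SO(3)$ (Klein's enumeration) to obtain exactly the list $\mathbb{Z}/r$, $D_r$, $A_4$, $S_4$, $A_5$.

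For part (b), the $PGL_2$-conjugacy class of a torsion element $g$ is captured by the invariant $t:=\mathrm{tr}(\tilde g)^2/\det(\tilde g)$, where $\tilde g\in GL_2$ is any lift, and this invariant lies in $F$ iff $g\in PGL_2(F)$. If $g$ has order exactly $r$, then the eigenvalue ratio of $\tilde g$ is a primitive $r$-th root of unity $\eta_r$, and a direct computation gives $t=2+\eta_r+\eta_r^{-1}$, whence $\eta_r+\eta_r^{-1}\in F$ is necessary. Conversely, whenever $\eta_r+\eta_r^{-1}\in F$ one writes down a companion-type matrix over $F$ realising this invariant, producing an explicit order-$r$ element of $PGL_2(F)$; since both directions are argued via a lift to $GL_2$, the same condition characterises $\mathbb{Z}/r\hookrightarrow PSL_2(F)$, which accounts for the apparent repetition in the theorem statement.

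For parts (c) and (d), the key observation is that both $A_4$ and $S_4$ contain the Klein four-group $V_4$, and I would show that an embedding $V_4\hookrightarrow PGL_2(F)$ lifts to a copy of the quaternion group $Q_8\hookrightarrow SL_2(F)$ whose three non-trivial involutions pairwise anticommute (otherwise the lifts would be simultaneously diagonalisable). This is equivalent to the Hamilton quaternion algebra $(-1,-1)_F$ splitting over $F$, which via the norm form from $F(\sqrt{-1})/F$ translates exactly to $-1$ being a sum of two squares in $F$. Once $V_4$ is present, the order-$3$ generator of $A_4$ exists over $F$ automatically, since by (b) one only needs $\eta_3+\eta_3^{-1}=-1\in F$. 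The implication $A_4\subset PGL_2(F)\Rightarrow S_4\subset PGL_2(F)$ then follows from the fact that the normaliser of $A_4$ in $PGL_2(\mathbb{C})$ equals $S_4$, and the extra order-$2$ elements of $S_4\setminus A_4$ can be produced as explicit words in the quaternion order already constructed. Part (d) combines both: $A_5$ contains a $V_4$ and an element of order $5$, so by (b) and (c) the necessary conditions are that $-1$ be a sum of two squares in $F$ and that $\sqrt{5}\in F$ (since $\eta_5+\eta_5^{-1}=(-1+\sqrt{5})/2$); conversely, under these hypotheses one writes down explicit icosahedral generators in $M_2(F)$ using the splitting of $(-1,-1)_F$ together with $\sqrt{5}$.

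The main obstacle I anticipate is the rationality step in (c), namely ensuring that the additional involutions of $S_4\setminus A_4$ actually lie in $PGL_2(F)$ and not merely in $PGL_2$ of some quadratic extension. The cleanest approach is to fix a splitting $(-1,-1)_F\cong M_2(F)$ induced by the given $V_4$-embedding, observe that the quaternion order contains elements of order $4$ whose images in $PGL_2$ swap two of the three orthogonal axes of $V_4$, and verify that these are visibly $F$-rational because their matrix entries are $\mathbb{Q}$-linear combinations of entries of the $V_4$-generators. The rest of the argument is essentially formal once this rationality claim is pinned down.
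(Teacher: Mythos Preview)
The paper does not actually prove this theorem: it is stated with the citation \cite{MR2681719} (Beauville, \emph{Finite subgroups of $PGL_2(K)$}) and used as a black box to bound the integer $N$. So there is no ``paper's own proof'' to compare against; your proposal is a self-contained argument for a result the paper merely quotes.

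That said, your outline is essentially the standard one and is sound. Part (a) via the Cartan fixed-point argument in $\mathbb{H}_3$ reducing to Klein's classification of finite rotation groups is exactly how this is usually done. In part (b) your trace-invariant computation is correct; note that the repetition in the printed statement is almost certainly a typo for ``$PGL_2(F)$ contains $\mathbb{Z}/r$ iff it contains $D_r$'', which is how Beauville states it, and your argument extends to $D_r$ since the extra involution can always be realised by $\mathrm{diag}(1,-1)$ once the cyclic part is diagonalised. For (c) and (d), the quaternion-algebra criterion $(-1,-1)_F\cong M_2(F)$ is the right mechanism, and your anticipated obstacle (rationality of the extra $S_4$-involutions) is genuine but resolvable exactly as you say: once the splitting is fixed, the order-$4$ quaternion units already furnish the needed transpositions over $F$. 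One small point to tighten: in (d) you should also check the converse more carefully, since having $V_4$ and a $5$-cycle separately in $PGL_2(F)$ does not automatically place them in a common $A_5$; the explicit icosahedral generators you allude to are what actually close this gap.
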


The basis of $H_3(Vor_{2,F})$ has the form 
\begin{equation}
  T_i=\displaystyle\sum_{j=1}^{j_i}\frac{N}{|\Gamma_j|}\sum_{k=1}^{k_{j_i}}[\Delta_{i,j,k}]
\end{equation}
where 
$\Gamma_j$ the automorphism group of 3-cell
$D_{i,j}$ in $PSL_2(O_F)$,
and $D_{i,j}$ can be triangulated to the sum of
$\Delta_{i,j,k}$, some oriented ideal 3-simplices,
with cuspidal vertices 
\begin{equation}
  A_{i,j,k},B_{i,j,k},C_{i,j,k},D_{i,j,k}\in\mathbb{P}^1(F),
\end{equation}
with cross-ratios
\begin{equation}
  cr_{3,\sigma}(\Delta_{i,j,k})=\left(\frac{det(A_{i,j,k}D_{i,j,k})det(B_{i,j,k}C_{i,j,k})}{det(A_{i,j,k}C_{i,j,k})det(B_{i,j,k}D_{i,j,k})}\right)_\sigma
\end{equation}
where $*_\sigma$ denotes the component at $\sigma:F\hookrightarrow\mathbb{C}$.
Denote
\begin{equation}
  \mathrm{B}_{i,\sigma}=\displaystyle\sum_{j=1}^{j_i}\frac{N}{|\Gamma_j|}\sum_{k=1}^{k_{j_i}}[cr_{3,\sigma}(\Delta_{i,j,k})]
\end{equation}
in $\mathbb{Z}[F^\flat]$, or its image in $\bar{\mathfrak{p}}(F)$.
To avoid possible residue of form $(-1)\tilde{\wedge}F^*$
in $\partial_{2,F}$,
let
\begin{equation}
  \beta_{i,\sigma}:=2\mathrm{B}_{i,\sigma}.
\end{equation}
Then we obtain a Bloch element:
\begin{theorem}
    For every $1\leq i\leq s$ and $\sigma:F\hookrightarrow\mathbb{C}$,
    $\beta_{i,\sigma}$
    belongs to $\bar{B}(F)$.
\end{theorem}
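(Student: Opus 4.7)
The plan is to use the commutative diagram from Section 4.1,
\begin{equation*}
\begin{tikzcd}
C_3(\mathcal{L}) \ar[r, "d_3"] \ar[d, "f_{3,F}"] & C_2(\mathcal{L}) \ar[d, "f_{2,F}"] \\
\bar{\mathfrak{p}}(F) \ar[r, "\partial_{2,F}^{\nu'}"] & \tilde{\wedge}^2 F^* / \nu' \tilde{\wedge} F^*
\end{tikzcd}
\end{equation*}
with $\nu' = \{\pm 1\}$, together with the inclusion $2\bar{B}(F)_{\nu'}\subseteq\bar{B}(F)$ already recorded at the end of Section 4.1. First I would lift $T_i$, viewed as a 3-cycle in $\mathrm{Vor}_{2,F}$ modulo $PSL_2(O_F)$, to an honest integer chain $\tilde{T}_i=\sum_{j,k}(N/|\Gamma_j|)[\Delta_{i,j,k}]$ in $C_3(\mathcal{L})$ by picking oriented representatives of each $PSL_2(O_F)$-orbit. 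Because $N$ is the least common multiple of the orders of finite subgroups of $PSL_2(O_F)$, each coefficient $N/|\Gamma_j|$ is an integer, and by construction $f_{3,F}(\tilde{T}_i)=\mathrm{B}_{i,\sigma}$ in $\bar{\mathfrak{p}}(F)$.

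The heart of the argument is to show $f_{2,F}(d_3(\tilde{T}_i))=0$. Triangulating each non-simplicial cell $D_{i,j}$ annihilates the internal 2-faces automatically, so $d_3(\tilde{T}_i)$ reduces to a weighted sum of the external 2-faces of the polytopes $D_{i,j}$, with weights $N/|\Gamma_j|$. Because $T_i$ is a cycle in the quotient complex, this 2-chain maps to $0$ in $C_2(\mathcal{L})/PSL_2(O_F)$; hence it can be written as $d_3(\tilde{T}_i)=\sum_\alpha(\tau_\alpha-g_\alpha\tau_\alpha)$ for certain 2-simplices $\tau_\alpha\in C_2(\mathcal{L})$ and $g_\alpha\in PSL_2(O_F)\subseteq PGL_2(F)$. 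Property (a) of $cr_2$ in Section 4.1 says $f_{2,F}$ is $GL_2(F)$-invariant, so $f_{2,F}(\tau_\alpha-g_\alpha\tau_\alpha)=0$ in $\tilde{\wedge}^2F^*/\nu'\tilde{\wedge}F^*$ for each $\alpha$, proving the claim.

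Commutativity of the diagram now gives $\partial_{2,F}^{\nu'}(\mathrm{B}_{i,\sigma})=f_{2,F}(d_3(\tilde{T}_i))=0$, so $\mathrm{B}_{i,\sigma}\in\bar{B}(F)_{\nu'}$. Doubling kills the residue in $\nu'\tilde{\wedge}F^*$: from the defining identity $x\tilde{\wedge}(-x)=0$ one derives $2(-1)\tilde{\wedge}c=(-1)\tilde{\wedge}c^2=0$ in $\tilde{\wedge}^2F^*$, which is the content of the inclusion $2\bar{B}(F)_{\nu'}\subseteq\bar{B}(F)$. Therefore $\beta_{i,\sigma}=2\mathrm{B}_{i,\sigma}\in\bar{B}(F)$, as required.

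The hard part will be the bookkeeping in the boundary step: for each cell $D_{i,j}$ whose stabilizer $\Gamma_j$ does not preserve the chosen triangulation, one must verify that the 2-faces of $D_{i,j}$ pair up correctly under the $PSL_2(O_F)$-action so that, after assembling them with the weights $N/|\Gamma_j|$, the boundary $d_3(\tilde{T}_i)$ is on the nose a sum of $(\tau-g\tau)$-terms and not merely a cycle modulo some additional error. This is essentially a combinatorial check driven by how the stabilizers permute faces, but it is the place where the particular choice of $N$ and the weights is genuinely used; once it is in hand, the remainder of the proof is purely formal.
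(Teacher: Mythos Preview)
Your proposal is correct and follows essentially the same approach as the paper: use the commutative diagram relating $(C_*(\mathcal{L}),d_*)$ to $(\bar{\mathfrak{p}}(F)\to\tilde{\wedge}^2F^*/\nu'\tilde{\wedge}F^*)$, invoke that $T_i$ is a 3-cycle, and conclude $\partial_{2,F}(\beta_{i,\sigma})=0$. The paper's own proof is only a few lines and simply asserts that ``$T_i$ maps to $0$ under $d$'' and then ``descends'' through the diagram; you have unpacked this by explicitly lifting to $\tilde{T}_i\in C_3(\mathcal{L})$, writing $d_3(\tilde{T}_i)$ as a sum of $(\tau-g\tau)$-terms, and using the $GL_2(F)$-invariance of $cr_2$ --- exactly the content hidden behind the paper's word ``descending.'' Your explicit treatment of the $\nu'=\{\pm1\}$ ambiguity and the doubling step is likewise what the paper handles in one sentence (``to avoid possible residue of form $(-1)\tilde{\wedge}F^*$''), so there is no genuine divergence in method.
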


\begin{proof}
    Since every $T_i$ belongs to $H_3(Y_{2,F}^*)$, 
    every $T_i$ maps to $0$ under $d:C_3(F)\rightarrow C_2(F)$.
    Descending to the map 
    \begin{equation}
      \partial_{2,F}:\bar{\mathfrak{p}}(F)\rightarrow\tilde{\wedge}^2F^*,
    \end{equation}
    we see that 
    \begin{equation}
      \partial_{2,F}(\beta_{i,\sigma})=0\in \tilde{\wedge}^2F^*.
    \end{equation}
    The proof is complete. 
\end{proof}

\subsection{A representation for covolume of $PSL_2(O_F)$}

We now have $(r_2)^2$ elements 
\begin{equation}
  \beta_{i,\sigma}\in\bar{B}(F)
\end{equation}
for $1\leq i\leq r_2$ and $\sigma$ runs over non-conjugate embeddings
of $F\hookrightarrow\mathbb{C}$. Let 
$M$ denote the $r_2\times r_2$ matrix 
$(reg_2(\beta_{i,\sigma_j}))_{i,\sigma_j}$, where $reg$ is 
Beilinson's regulator. By Theorem 4.1(c),
$reg_2$ is linear and induced by $[x]\mapsto iD(x)$.
We hope to find the relation between $M$ and the 
covolume of $PSL_2(O_F)$, and then the second regulator of $F$.

\begin{lemma}
  \begin{equation}
    |\det(M)|=(2N)^{r_2}\cdot vol(Q_{2,F}).
  \end{equation}
\end{lemma}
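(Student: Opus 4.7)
The plan is to expand each entry of $M$ as a weighted sum of hyperbolic volumes via the Bloch--Wigner dilogarithm, factor out the common multiple $2N$, and identify the determinant of the remaining matrix with the orbifold volume of $Q_{2,F}$.

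First, by Theorem 4.1(c) the regulator sends each symbol $[x] \in F^\flat$ to $D(\sigma_j(x))$ (up to the overall normalization, absorbed by the absolute value). Inserting this formula into $\beta_{i,\sigma_j} = 2B_{i,\sigma_j}$ from (4.9)--(4.10), I obtain
\begin{equation*}
  reg_2(\beta_{i,\sigma_j}) \;=\; 2N\sum_{j'}\frac{1}{|\Gamma_{j'}|}\sum_{k} D\bigl(cr_{3,\sigma_j}(\Delta_{i,j',k})\bigr).
\end{equation*}
By the Remark after Definition 1.1, each summand equals the hyperbolic volume of the projection of $\Delta_{i,j',k}$ into the $\sigma_j$-th copy of $\mathbb{H}_3$ in $Y_{2,F} = (\mathbb{H}_3)^{r_2}$.

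Second, let $\omega_j$ denote the $PSL_2(O_F)$-invariant hyperbolic volume 3-form pulled back from the $\sigma_j$-th factor of $Y_{2,F}$ and descended to $Q_{2,F}$. The stabilizer weights $N/|\Gamma_{j'}|$ built into $T_i$ are precisely the orbifold multiplicities needed so that the previous sum equals the integration pairing
\begin{equation*}
  V_{ij} \;:=\; \int_{T_i}\omega_j.
\end{equation*}
Thus $M = (2N)V$ and $|\det M| = (2N)^{r_2}|\det V|$, reducing the lemma to $|\det V| = \mathrm{vol}(Q_{2,F})$.

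Third, I evaluate $|\det V|$. By Matsushima's theorem applied to the product of symmetric spaces $Y_{2,F}$, the invariant 3-forms $\omega_1,\dots,\omega_{r_2}$ span $H^3(Q_{2,F},\mathbb{R})$, and $\omega_1\wedge\cdots\wedge\omega_{r_2}$ is the orbifold volume form, integrating to $\mathrm{vol}(Q_{2,F})$ against the fundamental class. Applying Poincar\'e duality on the compact orbifold $\overline{Q_{2,F}}$ between $H_3$ and $H^{3r_2-3}$, the period matrix $V$ can be interpreted as a pairing of an integer basis of $H_3(Q_{2,F})_{tf}$ against an integer basis of $H^{3r_2-3}(Q_{2,F},\mathbb{R})$ chosen so that the natural cup product into $H^{3r_2}$ produces $\mathrm{vol}(Q_{2,F})$ as its determinantal invariant.

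The main obstacle is this last step: confirming that the integer lattice structure of $H_3(Q_{2,F})_{tf}$ together with the specific normalization of the $\omega_j$'s yields $|\det V| = \mathrm{vol}(Q_{2,F})$ cleanly, without spurious factorials or extra powers of $\mathrm{vol}(Q_{2,F})$. My approach to this would be to pass to a torsion-free finite-index subgroup $\Gamma \le PSL_2(O_F)$, invoke classical Poincar\'e duality on the closed orbifold $\Gamma\backslash\overline{Y_{2,F}}$, and then transfer back, tracking the covering index against the rescaling of volumes and $\omega_j$-periods; a second check is that the triangulation ambiguities flagged at the end of Section~3 change each $T_i$ only by a unimodular transformation, which leaves $|\det V|$ unchanged.
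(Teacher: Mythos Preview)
Your reduction to $|\det V|=\mathrm{vol}(Q_{2,F})$, with $V_{ij}=\int_{T_i}\omega_j$, matches the paper exactly: the paper writes $J=(\int_{c_i}dV_j)_{i,j}$ and observes $|\det M|=(2N)^{r_2}|\det J|$ just as you do. The divergence is only in the step you flag as the ``main obstacle.''

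The paper avoids Matsushima's theorem and Poincar\'e duality entirely. Instead it argues by pure multilinearity. Let $[d_1],\dots,[d_{r_2}]\in H^3(Q_{2,F};\mathbb{R})$ be the basis dual to $[c_1],\dots,[c_{r_2}]$, so $\int_{c_i}d_j=\delta_{ij}$. Then $(dV_1,\dots,dV_{r_2})=(d_1,\dots,d_{r_2})J$ in $H^3$. The paper asserts that $H_{3r_2}(Q_{2,F})$ is generated by a class $\mathfrak{p}$ which, via the product structure of $Y_{2,F}=(\mathbb{H}_3)^{r_2}$, pairs with $d_{\pi(1)}\wedge\cdots\wedge d_{\pi(r_2)}$ to give $\mathrm{sgn}(\pi)$ for every permutation $\pi$. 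Expanding $dV_1\wedge\cdots\wedge dV_{r_2}$ via the change-of-basis matrix $J$ and using this sign rule, the Leibniz formula yields
\[
\mathrm{vol}(Q_{2,F})=\langle \mathfrak{p},\, dV_1\wedge\cdots\wedge dV_{r_2}\rangle=\det(J).
\]
So the ``spurious factorials'' you worry about cancel automatically in the alternating sum, and no passage to a torsion-free subgroup is needed. Your Poincar\'e-duality route could presumably be made to work, but the paper's computation is shorter and more elementary; what it costs is the unproved assertion that the fundamental class $\mathfrak{p}$ really does factor as the stated cross product of pushforwards of the $c_i$, which the paper takes for granted.
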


\begin{proof}
    $Q_{2,F}$ has a universal covering $Y_{2,F}=(\mathbb{H}_3)^{r_2}$, 
    which has differential form of volume
    \begin{equation}
      dV=dV_1\wedge dV_2\wedge\dots\wedge dV_{r_2}
    \end{equation}
    where $V_i$ denotes the differential form of volume
    at the $i$-th component. Since 
    \begin{equation}
      PSL_2(\mathbb{C})\subset Isom((\mathbb{H}_3)^{r_2})^+,
    \end{equation}
    the form $dV$ is invariant under $PSL_2(O_F)$,
    and descend to the differential form 
    $dW$ of $Q_{2,F}$.

    Let $[c_1],\dots,[c_{r_2}]$ be a set of basis of 
    $H_3(Q_{2,F};\mathbb{R})$,
    and $[d_1],\dots,[d_{r_2}]$ be the dual basis in 
    $H^3(Q_{2,F};\mathbb{R})$.
    Then 
    \begin{equation}
      \langle[c_i],[d_j]\rangle=\int_{c_i}d_j=\delta_{ij}.
    \end{equation}
    Let $dV_i$ be the volume form of the 
    $i$-th component of $(\mathbb{H}_3)^{r_2}$.
    Then by the volume formula of ideal tetrahedra,
    \begin{equation}
      \langle[c_i],[dV_j]\rangle=\int_{c_i}dV_j=D(cr_{3,\sigma_j}(c_i)).
    \end{equation}
    Therefore we have 
    \begin{equation}
      (dV_1,\dots,dV_{r_2})=(d_1,\dots,d_{r_2})J,
    \end{equation}
    where
    \begin{equation}
      J=(\int_{c_i}V_j)_{i,j=1}^{r_2}.
    \end{equation}
    
    The group $H_{dR}^{3{r_2}}(Q_{2,F})$ is generated by
    \begin{equation}
      \mathfrak{d}=d_1\wedge\dots\wedge d_{r_2}.
    \end{equation}
    And the group $H_{3{r_2}}(Q_{2,F})$ is generated by
    \begin{equation}
      \mathfrak{p}=p_{1*}(c_1)\times\dots\times p_{{r_2}*}(c_{r_2})
    \end{equation}
    where $p_{i*}$ is the pushforward map induced by
    \begin{equation}
      p_i:(\mathbb{H}_3)^{r_2}\rightarrow\mathbb{H}_3,
    \end{equation}
    the $i$-th projection.

    By the multi-linearity of wedge product,
    and the fact that 
    \begin{equation}
      \langle\mathfrak{p},d_{\pi(1)}\wedge\dots\wedge d_{\pi(r_2)}\rangle=sgn(\pi)
    \end{equation}
    while $\pi$  any permutation of $1,\dots,r_2$,
    One can verify that 
    \begin{equation}
      vol(X_{2,F})=<[c_1]\times\dots\times[c_{r_2}],dV_1\wedge\dots\wedge dV_{r_2}>=\det(J).
    \end{equation}
    Note that $|\det(M)|=(2N)^{r_2}|\det(J)|.$
    The proof is complete.

\end{proof}

\begin{corollary}
    $\{[\beta_{i,id}]|i=1,\dots,r_2\}$ generate a full-rank subgroup
    of $\bar{B}(F)$, with index
    \begin{equation}
      \frac{2^{1+r_2}N^{r_2}|K_2O_F|}{|(K_3O_F)_{tor}|}.
    \end{equation}
\end{corollary}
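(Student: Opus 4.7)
The plan is to compute $|\det M|$ in two different ways and then take the ratio with the covolume of $\bar{B}(F)_{tf}$ inside $\mathbb{R}^{r_2}$ to read off the index. The geometric side is supplied by the Lemma just proved: $|\det M|=(2N)^{r_2}vol(Q_{2,F})$. The arithmetic side rewrites $vol(Q_{2,F})$ as an expression in $R_2(F)$, $|K_2(O_F)|$, and $|K_3(O_F)_{tor}|$ by chaining Borel's volume formula (Theorem 2.5), the functional equation of $\zeta_F$, and Proposition 1.14.

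Concretely, I would first substitute Theorem 2.5 to get
\[
|\det M|=(2N)^{r_2}\cdot 2^{1-3r_2}\pi^{-2r_2}|\Delta_F|^{3/2}\zeta_F(2).
\]
Next I would apply the functional equation $\xi_F(s)=\xi_F(1-s)$ with $s=2$ and $s=-1$. For a CM field ($r_1=0$) the pole of order $r_2$ of $\Gamma(s)^{r_2}$ at $s=-1$ exactly cancels the zero of the same order of $\zeta_F$ there (the latter coming from Borel's theorem, since $\mathrm{rank}\,K_3(O_F)=r_2$); a careful Laurent expansion then yields
\[
vol(Q_{2,F})=2\pi^{r_2}(-1)^{r_2}\zeta_F^*(-1).
\]
Finally, Proposition 1.14 supplies $\zeta_F^*(-1)=(-2)^{r_2}\frac{|K_2(O_F)|}{|K_3(O_F)_{tor}|}R_2(F)$ for a CM field, so that
\[
|\det M|=2^{1+2r_2}N^{r_2}\pi^{r_2}\frac{|K_2(O_F)|}{|K_3(O_F)_{tor}|}R_2(F).
\]

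The index of the subgroup generated by $\{\beta_{i,id}\}$ in $\bar{B}(F)_{tf}$ equals $|\det M|$ divided by the covolume of the image of $\bar{B}(F)_{tf}$ in $\mathbb{R}^{r_2}$ under the regulator $[x]\mapsto (iD(\sigma_j(x)))_j$. By Theorem 4.1 this regulator factors through the isomorphism $\bar{B}(F)_{tf}\simeq K_3(O_F)_{tf}$ and Beilinson's $reg_{2,F}$; once one accounts for the normalization between the Bloch--Wigner function $D$ and Beilinson's regulator, this covolume works out to $(2\pi)^{r_2}R_2(F)$. The $\pi^{r_2}$ contributed by the functional equation is therefore absorbed, and division produces the stated integer index $\frac{2^{1+r_2}N^{r_2}|K_2(O_F)|}{|K_3(O_F)_{tor}|}$. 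The main difficulty will be precisely this last bookkeeping step: ensuring that the conventions used in Theorem 4.1(c), Proposition 1.14, and the definition of $R_2(F)$ are mutually compatible so that every $\pi$ and every power of $2$ lines up and the quotient is the claimed integer.
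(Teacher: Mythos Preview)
Your proposal is correct and follows essentially the same route as the paper: combine the Lemma with Borel's volume formula, the functional equation for $\zeta_F$, and Proposition~1.14 to rewrite $|\det M|$ in terms of $R_2(F)$, $|K_2O_F|$ and $|(K_3O_F)_{tor}|$, then divide by the covolume of $\bar{B}(F)_{tf}$ under the Bloch--Wigner regulator. You are in fact more explicit than the paper about the last normalization step; the paper simply records the chain of identities and ends with $\bigl|\det(M)/(2\pi)^{r_2}\bigr|=\frac{2^{1+r_2}N^{r_2}|K_2O_F|}{|(K_3O_F)_{tor}|}$, leaving the identification of $(2\pi)^{r_2}R_2(F)$ with the covolume of $\bar{B}(F)_{tf}$ implicit.
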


\begin{proof}
    By Borel's volume formula,
    \begin{equation}
      vol(Q_{2,F})=2^{1-3r_2}\pi^{-2r_2}|D_F|^{3/2}\zeta_F(2).
    \end{equation}
    By the equation of $\zeta$-functions over number fields,
    \begin{equation}
      |D_F|^{3/2}\zeta_F(2)=(2\pi)^{3r_2}\zeta_F^*(-1).
    \end{equation}
    By Theorem 2.3 of \cite{MR4264211},
    \begin{equation}
      \zeta_F^*(-1)=(-2)^{r_2} R_2(F)\frac{|K_2O_F|}{|(K_3O_F)_{tor}|}.
    \end{equation}
    Therefore we have 
    \begin{equation}
      \left|\frac{det(M)}{(2\pi)^{r_2}}\right|=\frac{2^{1+r_2}N^{r_2}|K_2O_F|}{|(K_3O_F)_{tor}|}.
    \end{equation}
    The proof is complete.
\end{proof}

\begin{remark}
    If we take $r_2=1$, then $N=6$ in most general cases (see
    the first paragraph of Appendix A of \cite{MR4264211},
    and note that our $N$ is the least common multiple of orders
    of finite subgroups of $PSL_2(O_F)$),
    and the results above fit its Theorem 4.7 and Corollary 4.10.
\end{remark}

\begin{example}
  Let $F$ be an imaginary quadratic field. 
  Then $r_2=1$,
  $|(K_3O_F)_{tor}|=24$, 
  and we can take $N=12$. Then the formula (4.15)
  gives an element of $\bar{B}(F)$ with index $|K_2O_F|$,
  which exactly fits the main result of 
  \cite{MR4264211}.
\end{example}

\section*{Aknowledgements}

The author would like to appreciate sincere gratitude to 
his supervisor, Prof Hourong Qin, for his directions and suggesting this problem.
Also, the author thanks to support by NSFC (Nos.12231009, 11971224).

\bibliographystyle{unsrt}
\bibliography{114514}

\end{document}